\title[Twisted Sequences]{Twisted Sequences of Extensions}
\author[K. J. Carlin]{Kevin J. Carlin}
\address{Department of Mathematics and Computer Science\\
Assumption College\\
500 Salisbury St.\\
Worcester MA 01609-1296}
\email{kcarlin@assumption.edu}
\date{}
\subjclass[2010]{17B10, 17B55}
\newcommand{\la}{\lambda}
\newcommand{\cO}{\mathcal{O}}
\newcommand{\cA}{\mathcal{A}}
\newcommand{\fg}{\mathfrak g}
\newcommand{\onto}{\mathrel{\rightarrow\kern-8pt\rightarrow}}
\newcommand{\C}{\mathbb{C}}
\DeclareMathOperator{\Ker}{Ker}
\DeclareMathOperator{\Coker}{Coker}
\DeclareMathOperator{\Coim}{Coim}
\DeclareMathOperator{\Image}{Im}
\DeclareMathOperator{\Ext}{Ext}
\DeclareMathOperator{\E}{E}
\DeclareMathOperator{\h}{H}
\DeclareMathOperator{\Id}{Id}
\newcommand{\tothe}[1]{\strut^{\!\scriptscriptstyle#1}}
\newtheoremstyle{plainsc}%
{6pt plus 1pt minus 1pt}%
{6pt plus 1pt minus 1pt}%
{\itshape}%
{}%
{\scshape}%
{}%
{.5em}%
{}
\newtheoremstyle{definitionsc}%
{6pt plus 1pt minus 1pt}%
{6pt plus 1pt minus 1pt}%
{}%
{}%
{\scshape}%
{:}%
{.5em}%
{}
\theoremstyle{plainsc}
\newtheorem{thm}{Theorem}[section]
\newtheorem{lem}[thm]{Lemma}
\newtheorem{prop}[thm]{Proposition}
\newtheorem{cor}[thm]{Corollary}
\newtheorem{conj}[thm]{Conjecture}
\theoremstyle{definitionsc}
\numberwithin{equation}{thm}
\begin{document}

\begin{abstract}
Gabber and Joseph \cite[\S5]{GJ} introduced a ladder diagram between two natural sequences of extensions. Their diagram is used to produce a \lq twisted' sequence that is applied to old and new results on extension groups in category $\cO$.
\end{abstract}

\maketitle

\section{The Gabber-Joseph Isomorphism}
\label{sec:GJ}

Let $\cA$ be an abelian category with enough projectives. Let $\E\tothe p=\Ext\tothe p_\cA$ (with the convention that $\E\tothe p{=}\,0$ if $p<0$). Let $\h=\E\tothe0=\hom_\cA$. If $\E{}$ is used to represent some $\E\tothe p$, then use the relative notations, $\E\tothe+$ and $\E\tothe-$, to represent $\E\tothe{p+1}$ and $\E\tothe{p-1}$ respectively. 

Suppose that $R$ and $T$ are exact, mutually adjoint endofunctors defined on $\cA$. Let $\theta=R\,T$. The unit of the adjunction $(T,R)$ is $\eta:\Id\rightarrow \theta$ and the co-unit of the adjunction $(R,T)$ is $\epsilon:\theta\rightarrow\Id$. Use these to define the functors,
\begin{align*}
C&=\Coker\eta&D=&\Coim\eta\\
K&=\Ker\epsilon&I=&\Image\epsilon.
\end{align*}
There are also natural transformations, $\iota:I\rightarrow\Id$ and $\pi:\Id\rightarrow D$.

There is a natural adjoint pairing $(C,K)$ so that $C$ is right exact and $K$ is left exact. If $M$ and $N$ are objects in $\cA$, there are canonical exact sequences, $\,KN\hookrightarrow \theta N\onto IN\,$ and $\,DM\hookrightarrow\theta M\onto CM\,$. Each gives rise to a long exact sequence of extensions.

\begin{thm}
\label{GJ}
\cite[5.1.8]{GJ} Suppose that $M$ is $C$-acyclic. There is a natural commutative diagram with exact rows,
\[
\begin{CD}
@>>>\E(M,KN)@>>>\E(M,\theta N)@>>>\E(M,IN)@>{\delta_1}>>\E\tothe{+}\!(M,KN)@>>>\\
@.@V{\gamma}VV@V{\beta}VV@V{\alpha}VV@V{\gamma^+}VV@.\\
@>>>\E(CM,N)@>>>\E(\theta M,N)@>>>\E(DM,N)@>{\delta_2}>>\E\tothe{+}\!(CM,N)@>>>,
\end{CD}
\]
where $\beta$ is an isomorphism. If $DM$ is $C$-acyclic and $IN=N$, then $\alpha$ and $\gamma$ are isomorphisms. 
\end{thm}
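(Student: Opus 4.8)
The plan is to build the whole diagram from one projective resolution $P_\bullet\to M$ together with the two adjunctions, and to treat the last sentence as a vanishing statement. Since $T$ has the exact right adjoint $R$ and $R$ has the exact right adjoint $T$, both $R$ and $T$ preserve projectives, so $\theta P_\bullet=RTP_\bullet\to\theta M$ is again a projective resolution; the adjunction isomorphisms give isomorphisms of complexes $\hom(RTP_\bullet,N)\cong\hom(TP_\bullet,TN)\cong\hom(P_\bullet,RTN)$, whose cohomology is $\beta\colon\E^p(M,\theta N)\xrightarrow{\ \sim\ }\E^p(\theta M,N)$. Writing this composite explicitly as $\beta(f)=\epsilon_N\circ R(\nu_{TN}\circ Tf)$, where $\nu\colon TR\to\Id$ is the counit of $(T,R)$, one gets $\beta(f)\circ\eta_M=\epsilon_N\circ R(\nu_{TN})\circ\eta_{RTN}\circ f=\epsilon_N\circ f$ from naturality of $\eta$ and the triangle identity $R\nu\circ\eta R=\Id_R$; thus $\beta$ intertwines $(\epsilon_N)_{*}$ with $(\eta_M)^{*}$, and symmetrically $(\eta_N)_{*}$ with $(\epsilon_M)^{*}$, which is the compatibility that makes the squares flanking $\beta$ commute. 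Two facts to record here for later use: $T(\Ker\eta_M)=0$, immediate from $\nu_{TM}\circ T\eta_M=\Id_{TM}$ (so $T\eta_M$ is split monic), and hence $\theta(\Ker\eta_M)=D(\Ker\eta_M)=C(\Ker\eta_M)=0$ and, via the isomorphism just built, $\E^{*}(\Ker\eta_M,\theta N)=0$.

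For the two rows and the comparison maps: applying $\hom(P_\bullet,-)$ to the exact sequence $0\to KN\to\theta N\to IN\to0$ gives a short exact sequence of complexes whose long exact cohomology sequence is the top row, and the bottom row is the $\E(-,N)$ long exact sequence of $0\to DM\to\theta M\to CM\to0$. The map $\gamma$ should be built from the adjoint pairing $(C,K)$: on $\E^0$ it is the adjunction isomorphism $\hom(M,KN)\cong\hom(CM,N)$, and since $M$ is $C$-acyclic this propagates along $P_\bullet$ — via the short exact sequence of complexes $0\to DP_\bullet\to\theta P_\bullet\to CP_\bullet\to0$, which now resolves $0\to DM\to\theta M\to CM\to0$ — to a natural map $\gamma$ between $\E^p(M,KN)$ and $\E^p(CM,N)$ in every degree; $\alpha$ is then determined by the row containing $\beta$ and the surjection $\theta N\to IN$, using the compatibility of $\beta$ with $\eta$ and $\epsilon$, and commutativity of the ladder is naturality of these constructions. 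I expect this bookkeeping — in particular seeing precisely how the $C$-acyclicity of $M$ makes $\gamma$ well defined in positive degrees (the complexes $CP_i$ need not be projective) — to be the main technical obstacle in this half.

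For the last assertion, suppose also that $DM$ is $C$-acyclic and $IN=N$. One clean route is to reduce everything to a single vanishing: $\pi_M\colon M\to DM=M/\Ker\eta_M$ induces $(\pi_M)^{*}\colon\E^p(DM,N)\to\E^p(M,N)$, and the claim becomes that $(\pi_M)^{*}$ is an isomorphism, i.e. $\E^{*}(\Ker\eta_M,N)=0$. Granting this, $IN=N$ identifies the right-hand column of the ladder with $\E(M,N)$ and the top-row map $\E(M,\theta N)\to\E(M,IN)$ with $(\epsilon_N)_{*}=(\pi_M)^{*}\circ(j_M)^{*}\circ\beta$, where $j_M\colon DM\hookrightarrow\theta M$, so the two long exact sequences agree on two consecutive terms via the isomorphisms $\beta$ and $(\pi_M)^{*}$; the five lemma applied along the ladder, using that $\beta$ is an isomorphism in every degree and keeping track of the connecting maps, then forces $\gamma$ and $\alpha$ to be isomorphisms. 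To prove the vanishing: the long exact sequence of the derived functors $L_\bullet D$ on $0\to\Ker\eta_M\to M\to DM\to0$, together with the $C$-acyclicity (equivalently $D$-acyclicity) of $M$ and $DM$, gives $L_jD(\Ker\eta_M)=0$ for $j\ge1$, so $\Ker\eta_M$ is itself $C$-acyclic; feeding $\Ker\eta_M$ into the first part of the theorem then produces a ladder with zero bottom row (since $\theta(\Ker\eta_M)=D(\Ker\eta_M)=C(\Ker\eta_M)=0$), and combining this with the short exact sequence $0\to KN\to\theta N\to N\to0$ (valid because $IN=N$) and $\E^{*}(\Ker\eta_M,\theta N)=0$ reduces $\E^p(\Ker\eta_M,N)$ to $\E^{p+1}(\Ker\eta_M,KN)$. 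Closing this is the crux of the argument: it is immediate when $N$ is of the form $RY$, since then $\E^p(\Ker\eta_M,N)\cong\E^p(T\Ker\eta_M,Y)=0$ (and such $N$ are $\epsilon$-split with no nonzero $T$-torsion subobjects, while $\Ker\eta_M$ is $T$-torsion), and the hypothesis $IN=N$ is what must be leveraged in place of that stronger condition. This, together with the $C$-acyclicity bookkeeping of the second paragraph, is where the real work lies.
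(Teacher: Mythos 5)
Your sketch of $\beta$ and of the two long exact rows is fine (and your observations that $T(\Ker\eta_M)=0$, hence $\theta(\Ker\eta_M)=0$ and $\E^{*}(\Ker\eta_M,\theta N)=0$, are correct), but there are genuine gaps at exactly the two points you flag. First, the diagram itself: you never construct $\gamma$ and $\alpha$ in positive degrees, nor verify commutativity of the squares involving the connecting maps; the adjunction identities of your first paragraph do not by themselves do this, and it is where most of the paper's proof lives. The paper's construction: take projective resolutions $X\onto DM$ and $Z\onto CM$, use the horseshoe lemma to build a split exact resolution $0\to X\to Y\to Z\to 0$ of $0\to DM\to\theta M\to CM\to 0$, and use the comparison theorem to get chain maps $a\colon X\to DP$, $b\colon Y\to\theta P$, $c\colon Z\to CP$ over the identity maps ($C$-acyclicity of $M$ is what makes $DP\to\theta P\to CP$ a resolution of that sequence); applying $\h(-,N)$ to this ladder and to the adjunction diagram comparing $\h(P,KN)\to\h(P,\theta N)\to\h(P,IN)$ with $\h(CP,N)\to\h(\theta P,N)\to\h(DP,N)$ (whose third vertical map $\phi_P$ is pinned down by $\h(\pi_P,N)\phi_P=\h(P,\iota_N)$) produces the commutative ladder, with $\beta$ an isomorphism because $b$ is a homotopy equivalence. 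Something of this kind is required to define $\gamma$ at all, since the complexes $CP_i$ need not be projective, as you note.

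Second, and more seriously, your treatment of the last assertion cannot be closed along the route you propose. Reformulating the claim as $\E^{*}(\Ker\eta_M,N)=0$ is a correct equivalence (granted the first part), but your argument only delivers the degree-raising identification $\E^{p}(\Ker\eta_M,N)\cong\E^{p+1}(\Ker\eta_M,KN)$, which never terminates: the theorem is stated for an arbitrary abelian category with enough projectives, so no finite global dimension is available, and the hypothesis $IN=N$ is never actually brought to bear beyond producing that shift. The paper does not prove a vanishing statement at all; it shows $\E(\pi_M,N)$ is a two-sided inverse of $\alpha$. Lifting $\pi_M$ to a chain map $f\colon P\to X$ and using the uniqueness part of the comparison theorem (so $af\simeq\pi_P$) gives $\E(\pi_M,N)\,\alpha=\E(M,\iota_N)$; when $DM$ is $C$-acyclic, so that $DX\onto DM$ is a resolution, the lift $D(f)$ satisfies $D(f)a\simeq\pi_X$, and a short computation with $\phi_P$, $\phi_X$ gives $\alpha\,\E(\pi_M,IN)=\E(DM,\iota_N)$. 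With $IN=N$ both right-hand sides are the identity, so $\alpha$ is an isomorphism, and $\gamma$ follows by the long-five lemma (using that $\beta$ is an isomorphism). In short, the vanishing $\E^{*}(\Ker\eta_M,N)=0$ is a consequence of the theorem, not an available stepping stone, and the chain-level comparison argument (or some substitute for it) is the missing content of your proposal.
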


\begin{proof}
Let $P\onto M$ be a projective resolution. There is an exact sequence of chain complexes,
\[
\begin{CD}
0@>>>DP@>>>\theta P@>>>CP@>>>0.
\end{CD}
\]
Since $M$ is $C$-acyclic, this is a resolution of the exact sequence,
\begin{equation}
\label{Mseq}
\begin{CD}
0@>>>DM@>>>\theta M@>>>CM@>>>0.
\end{CD}
\end{equation}

Let $X\onto DM$ and $Z\onto CM$ be projective resolutions. Use the horseshoe lemma \cite[2.28]{W} to construct a split exact sequence resolving diagram \eqref{Mseq},
\begin{equation}
\label{Pres}
\begin{CD}
0@>>>X@>>>Y@>>>Z@>>>0.
\end{CD}
\end{equation}
By the comparison theorem \cite[2.3.7]{W}, there are chain maps $a:X\rightarrow DP$ and $c:Z\rightarrow CP$ lifting $\Id_{DM}$ and $\Id_{CM}$ respectively. Using the splitting maps of diagram \eqref{Pres}, construct a chain map $b:Y\rightarrow\theta P$ lifting $\Id_{\theta M}$ and completing a commutative diagram of chain complexes with exact rows,
\[
\begin{CD}
0@>>>X@>>>Y@>>>Z@>>>0\\
@.@VaVV@VbVV@VcVV@.\\
0@>>>DP@>>>\theta P@>>>CP@>>>0.
\end{CD}
\]
Applying $\h(-,N)$ yields a commutative diagram with exact rows,
\begin{equation}
\label{A}
\begin{CD}
0@>>>\h(CP,N)@>>>\h(\theta P,N)@>>>\h(DP,N)\\
@.@VVV@VVV@VVV@.\\
0@>>>\h(Z,N)@>>>\h(Y,N)@>>>\h(X,N)@>>>0
\end{CD}
\end{equation}

Since $P$ is a projective complex, there is also a natural commutative diagram of complexes with exact rows,
\begin{equation}
\label{B}
\begin{CD}
0@>>>\h(P,KN)@>>>\h(P,\theta N)@>>>\h(P,IN)@>>>0\\
@.@VVV@VVV@V\phi_PVV@.\\
0@>>>\h(CP,N)@>>>\h(\theta P,N)@>>>\h(DP,N)\,.
\end{CD}
\end{equation}
The chain map $\phi_P$ is uniquely defined by the diagram because $\h(\pi_P,N)\phi_P=\h(P,\iota_N)$. The first two vertical mappings are isomorphisms.

Combining  diagram \eqref{A} and  diagram \eqref{B}, and applying \cite[1.3.4]{W} yields the Gabber-Joseph diagram. Since $\theta P\onto\theta M$ is a projective resolution, $b$ is a homotopy equivalence so $\beta$ is an isomorphism. (So far, this is the same as the proof given in \cite[5.1.8]{GJ}.)

Let $f:P\rightarrow X$ be a chain map lifting $\pi_M$. Then, by the uniqueness part of the comparison theorem, $af$ is homotopic to $\pi_P$. So,
$\h(f,N)\h(a,N)\phi_P$ is homotopic to $\h(\pi_P,N)\phi_P=\h(P,\iota_N)$. Passing to cohomology, $\E(\pi_M,N)\alpha=\E(M,\iota_N)$.

Now suppose that $DM$ is $C$-acyclic so that $DX\onto DM$ is a resolution. The chain map $D(f):DP\rightarrow DX$ lifts $\Id_{DM}$ so $D(f)a$ is homotopic to $\pi_X$. Hence $\h(a,N)\h(D(f),N)\phi_X$ is homotopic to $\h(\pi_X,N)\phi_X=\h(X,\iota_N)$.
 
By functoriality, $\h(f,N)\h(X,\iota_N)=\h(P,\iota_N)\h(f,IN)$ and, since $\pi$ is a natural transformation, $\h(\pi_P,N)\h(D(f),N)=\h(f,N)\h(\pi_X,N)$.  
Then,
\begin{align*}
\h(\pi_P,N)\h(D(f),N)\phi_X&=\h(f,N)\h(\pi_X,N)\phi_X=\h(f,N)\h(X,\iota_N)\\
&=\h(P,\iota_N)\h(f,IN)=\h(\pi_P,N)\phi_P\h(f,IN)\,.
\end{align*}
Since $\h(\pi_P,N)$ is a monomorphism, $\h(D(f),N)\phi_X=\phi_P\h(f,IN)$ which means that $\h(a,N)\phi_P\h(f,IN)$ is homotopic to $\h(X,\iota_N)$. Passing to cohomology yields $\alpha\E(\pi_M,IN)=\E(DM,\iota_N)$. 

If $IN=N$, $\E(M,\iota_N)=\Id$ and $\E(DM,\iota_N)=\Id$, so that $\alpha$ is an isomorphism. By the long-five lemma, $\gamma$ is also an isomorphism.
\end{proof}

\begin{cor}
\label{GJ'}
If $M$ and $DM$ are $C$-acyclic, then $\E(M,KN)$ and $\E(CM,IN)$ are isomorphic.
\end{cor}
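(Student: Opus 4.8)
The plan is to apply Theorem~\ref{GJ} with $N$ replaced by $IN$. For this I must check two things: that the extra hypothesis ``$IN=N$'' of the theorem survives the substitution, i.e.\ that $\epsilon_{IN}$ is an epimorphism (equivalently $I(IN)=IN$); and that the left-hand group is unaffected, i.e.\ that $K(IN)\cong KN$, so that $\E(M,K(IN))\cong\E(M,KN)$. Both will follow from one observation about $\theta$, which is the real content of the argument.

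The observation is that $\theta$ does not distinguish $N$ from $IN$: the monomorphism $\iota_N\colon IN\to N$ becomes an isomorphism after applying $\theta$. To see this, factor $\epsilon_N$ as $\theta N\onto IN\xrightarrow{\iota_N}N$; then $T\epsilon_N$ equals $T\iota_N$ precomposed with an epimorphism, so $T\iota_N$ and $T\epsilon_N$ have the same image in $TN$, namely $T(IN)$. But the triangle identity for the adjunction $(R,T)$ exhibits $T\epsilon_N\colon T\theta N\to TN$ as a split epimorphism (its section is the value at $TN$ of the unit of $(R,T)$), so this common image is all of $TN$; since $T\iota_N$ is also monic ($T$ is exact, $\iota_N$ monic) it is therefore an isomorphism, with $T(N/IN)=0$. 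Applying the exact functor $R$ makes $\theta\iota_N=RT\iota_N$ an isomorphism and $\theta(N/IN)=0$. Comparing kernels and images across the naturality square of $\epsilon$ along $\iota_N$ --- whose top edge $\theta\iota_N$ is now an isomorphism and whose bottom edge $\iota_N$ is monic --- then yields $K(IN)\cong KN$ and $\epsilon_{IN}$ an epimorphism.

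With these facts in hand, Theorem~\ref{GJ} applies to the pair $(M,IN)$: its standing hypothesis holds because $M$ is $C$-acyclic, and both further hypotheses hold too, namely $DM$ is $C$-acyclic and $I(IN)=IN$. Hence the vertical map $\gamma$ of the theorem's diagram for this pair is an isomorphism $\E(M,K(IN))\xrightarrow{\ \sim\ }\E(CM,IN)$, and composing it with the isomorphism $\E(M,KN)\cong\E(M,K(IN))$ induced by $KN\cong K(IN)$ proves the corollary. The one step that needs care is the observation of the second paragraph: that it is the triangle identity of $(R,T)$, and not of $(T,R)$, that splits $T\epsilon_N$, and that $\iota_N$ being only a monomorphism (not an isomorphism) is nonetheless exactly what is needed to read $K(IN)\cong KN$ and $I(IN)=IN$ off the naturality square.
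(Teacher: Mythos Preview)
Your proof is correct and follows essentially the same route as the paper's: both observe (via the triangle identity for $(R,T)$) that $T\epsilon_N$ is a split epimorphism, deduce that $\theta(\iota_N)$ is an isomorphism and hence $K(IN)\cong KN$ and $I(IN)=IN$, and then apply Theorem~\ref{GJ} with $IN$ in place of $N$ to conclude that $\gamma'$ is an isomorphism. The only difference is expository---you spell out the triangle-identity step and the naturality-square argument in more detail, whereas the paper compresses these into a single line.
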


\begin{proof}
By standard properties of adjunction maps, $T(\epsilon_N)$ is an epimorphism.  So $I(\iota_N)$ is an isomorphism as are $\theta(\iota_N)$ and $K(\iota_N)$. In this way,  $I(IN)$, $\theta(IN)$, and $K(IN)$ will be identified with $IN$, $KN$, and $\theta N$ respectively. Applying theorem \ref{GJ} to $IN$, there is a commutative diagram,
\begin{equation*}
\label{GJP}
\begin{CD}
\E(M,IN)@>\delta_1>>\E\tothe+\!(M,KN)\\
@V\alpha'VV@V{\gamma'}VV\\
\E(DM,IN)@>\delta'_2>>\E\tothe+\!(CM,IN),\\
\end{CD}
\end{equation*}
where the vertical mappings are isomorphisms and the primes indicate maps defined with respect to $IN$. 
\end{proof}

\section{The Twisted Sequence}
\label{sec:TS}
 
\begin{thm}
\label{TS}
Suppose that $M$ and $DM$ are $C$-acyclic. There is a commutative diagram with exact rows,
\[
\begin{CD}
@>>>\E\tothe{-}\!(DM,JN)@>\delta>>\E(M,IN)@>\alpha>>\E(DM,N)@>\kappa>>\E(DM,JN)@>>>\\
@.@|@V\delta_1VV@V\delta_2VV@|@.\\
@>>>\E\tothe{-}\!(DM,JN)@>d>>\E\tothe{+}\!(M,KN)%
@>\gamma>>\E\tothe{+}\!(CM,N)@>\chi>>\E(DM,JN)@>>>,
\end{CD}
\]
where $JN=\Coker\epsilon_N$. If $DM=M$, the first row is the long exact sequence associated to the exact sequence, $IN\hookrightarrow N\onto JN$.
\end{thm}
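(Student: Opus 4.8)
The plan is to obtain both rows, up to the isomorphisms of Corollary~\ref{GJ'}, from long exact $\Ext$-sequences of the short exact sequence $IN\hookrightarrow N\onto JN$, which is exact because $\iota_N$ is the image of $\epsilon_N$ and $JN=\Coker\epsilon_N=\Coker\iota_N$; write $q_N\colon N\onto JN$ for the quotient. For the top row I would apply $\h(DM,-)$ to obtain
\[
\cdots\to\E\tothe{-}(DM,JN)\xrightarrow{\partial}\E(DM,IN)\xrightarrow{\iota_\ast}\E(DM,N)\xrightarrow{q_\ast}\E(DM,JN)\to\cdots .
\]
Since $M$ and $DM$ are $C$-acyclic and $I(IN)=IN$, Theorem~\ref{GJ} applied to $IN$ makes $\E(\pi_M,IN)$ an isomorphism; let $\alpha'$ be its inverse. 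The relation $\alpha\,\E(\pi_M,IN)=\E(DM,\iota_N)$ from the proof of Theorem~\ref{GJ} gives $\alpha=\E(DM,\iota_N)\circ\alpha'=\iota_\ast\circ\alpha'$, so substituting $\E(M,IN)$ for $\E(DM,IN)$ along $\alpha'$ turns the display into the top row, with $\delta:=(\alpha')^{-1}\partial$ and $\kappa:=q_\ast$; exactness is then immediate. If $DM=M$, take $\pi_M=\Id$ and the resolution of $M$ in place of that of $DM$, so $\alpha'=\Id$, $\alpha=\iota_\ast$, and the top row is exactly the long exact sequence of $IN\hookrightarrow N\onto JN$ on $M$ --- the last assertion of the theorem.

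For the bottom row I would use two facts from the proof of Corollary~\ref{GJ'}: that $\gamma'\colon\E\tothe{+}(M,KN)\xrightarrow{\sim}\E\tothe{+}(CM,IN)$, and that $T(\epsilon_N)$ is an epimorphism. The first, together with naturality of the Gabber--Joseph diagram in the second variable and the fact that $K(\iota_N)$ is an isomorphism, gives $\gamma=\E\tothe{+}(CM,\iota_N)\circ\gamma'$. The second gives $T(JN)=\Coker T(\epsilon_N)=0$, hence $\E\tothe{p}(\theta M,JN)=\E\tothe{p}(RTM,JN)\cong\E\tothe{p}(TM,T(JN))=0$ for all $p$ --- using that $R$ is exact and, its adjoint $T$ being exact, preserves projectives --- so every connecting map $\E\tothe{-}(DM,JN)\to\E(CM,JN)$ of the long exact sequence of $\h(-,JN)$ on \eqref{Mseq} is an isomorphism. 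Then I take the long exact sequence of $\h(CM,-)$ on $IN\hookrightarrow N\onto JN$ and substitute $\E\tothe{+}(M,KN)$ for $\E\tothe{+}(CM,IN)$ along $\gamma'$ and $\E\tothe{p-1}(DM,JN)$ for each $\E\tothe{p}(CM,JN)$ along those connecting isomorphisms: the middle map becomes $\E\tothe{+}(CM,\iota_N)\circ\gamma'=\gamma$, and one reads off the bottom row, with $d$ and $\chi$ the induced composites; exactness is again automatic.

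It remains to check the three commuting squares, where the real work is. The square with horizontal edges $\alpha,\gamma$ is a square of the Gabber--Joseph ladder (Theorem~\ref{GJ}), so it commutes; the square with edges $\kappa,\chi$ reduces, after the identifications above, to naturality of the connecting map $\delta_2$ in the second variable applied to $q_N$, so it too commutes on the nose. For the last square I would simply set $d:=\delta_1\circ\delta$; the main obstacle is then to verify that this $d$ agrees, up to a harmless sign, with the connecting map of the bottom row constructed above. Using the commuting square $\delta_2'\circ\alpha'=\gamma'\circ\delta_1$ of the Gabber--Joseph ladder for $IN$, this reduces to the classical compatibility, up to sign, of the two $\delta$-functor structures of $\Ext$: the two iterated connecting homomorphisms $\E\tothe{-}(DM,JN)\to\E\tothe{+}(CM,IN)$ --- connecting first in the $N$-variable and then in the $M$-variable, versus the reverse order --- coincide up to sign. (Equivalently, one can derive everything on the chain level: writing $\alpha$ and $\gamma$ as chain maps as in the proof of Theorem~\ref{GJ}, the homotopy relations there show that $\operatorname{cone}(\alpha)$ computes the groups $\E\tothe{p}(DM,JN)$ and, $\beta$ being induced by a homotopy equivalence, that $\operatorname{cone}(\alpha)\simeq\operatorname{cone}(\gamma)[1]$; the two rows are the long exact sequences of these cones and the whole ladder, signs included, is the morphism of mapping-cone triangles underlying the Gabber--Joseph diagram.) Everything else is routine.
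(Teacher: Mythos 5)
Your proposal is correct and follows essentially the same route as the paper: both rows are obtained by twisting the long exact sequences of $IN\hookrightarrow N\onto JN$ under $\h(DM,-)$ and $\h(CM,-)$ along the isomorphisms $\alpha'$, $\gamma'$ and the connecting isomorphism forced by $\E(\theta M,JN)=0$ (from $TJN=0$ and the adjunction), which is exactly the paper's assembly of its Diagrams 1--3. The only differences are organizational: you extract $\alpha=\E(DM,\iota_N)\,\alpha'$ and $\gamma=\E\tothe{+}\!(CM,\iota_N)\,\gamma'$ from identities and naturality already available in Theorem~\ref{GJ} rather than repeating the chain-level computation, and you are explicit about the sign compatibility of the two iterated connecting maps, a point the paper's Diagram 2 simply asserts as commutativity of natural connecting maps.
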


\begin{proof}
Let $\phi'_P$ be the map defined by \eqref{B} with $N=IN$. Then $\h(\pi_P,IN)\phi'_P=\Id$. Using the notation from the previous section,
\begin{align*}
\h(\pi_P,N)\h(DP,\iota_N)\phi'_P&=\h(P,\iota_N)\h(\pi_P,IN)\phi'_P\\
&=\h(P,\iota_N)=\h(\pi_P,N)\phi_P\,.
\end{align*}
Because $\h(\pi_P,N)$ is a monomorphism, $\h(DP,\iota_N)\phi'_P=\phi_P$. Then
\[
\h(a,N)\phi_P=\h(a,N)\h(DP,\iota_N)\phi'_P=\h(X,\iota_N)\h(a,IN)\phi'_P.
\]
Taking cohomology, $\alpha=\E(DM,\iota_N)\,\alpha'$. In a similar fashion, $\beta=\E(\theta M,\iota_N)\,\beta'$ and $\gamma=\E(CM,\iota_N)\,\gamma'$.

\medskip
\noindent{\it Diagram 1\/:}
\begin{equation}
\label{D1}
\begin{CD}
@>>>\E\tothe-\!(DM,JN)@>\delta>>\E(M,IN)@>\alpha>>\E(DM,N)@>\kappa>>\E(DM,JN)@>>>\\
@.@|@V\alpha'VV@|@|@.\\
@>>>\E\tothe-\!(DM,JN)@>>>\E(DM,IN)@>>>\E(DM,N)@>>>\E(DM,JN)@>>>
\end{CD}
\end{equation}

\medskip
\noindent The second row is the long exact sequence associated to $IN\hookrightarrow N\onto JN$. Since $\alpha'$ is an isomorphism, define $\delta$ so that the first square commutes. This produces a commutative diagram with exact rows. If $DM=M$, $\alpha'=\Id$ which proves the second conclusion.

\medskip
\noindent{\it Diagram 2\/:}
\begin{equation}
\label{D2}
\begin{CD}
@>>>\E\tothe-\!(DM,JN)@>>>\E(DM,IN)@>>>\E(DM,N)@>>>\E(DM,JN)@>>>\\
@.@V\delta_3VV@V\delta'_2VV@V\delta_2VV@V\delta_3VV@.\\
@>>>\E(CM,JN)@>>>\E\tothe+\!(CM,IN)@>>>\E\tothe+\!(CM,N)@>>>\E\tothe+\!(CM,JN)@>>>
\end{CD}
\end{equation}

\medskip
\noindent This is a commutative diagram with exact rows where the vertical maps are the natural connecting maps.

\medskip
\noindent{\it Diagram 3\/}:
\begin{equation}
\label{D3}
\begin{CD}
@>>>\E(CM,JN)@>>>\E\tothe+\!(CM,IN)@>>>\E\tothe+\!(CM,N)@>>>\E\tothe+\!(CM,JN)@>>>\\
@.@A\delta_3AA@A\gamma'AA@|@A\delta_3AA@.\\
@>>>\E\tothe-\!(DM,JN)@>d>>\E\tothe+\!(M,KN)@>\gamma>>\E\tothe+\!(CM,N)@>\chi>>\E(DM,JN)@>>>
\end{CD}
\end{equation}

\medskip\noindent
Since $T(\epsilon_N)$ is surjective, $TJN=0$. By the adjoint pairing $(R,T)$, $\E(\theta M,JN)=\E(TM,TJN)=0$ so $\delta_3$ is an isomorphism.
Define $d$ and $\chi$ to make the diagram commutative. Then the second row is also exact.

Assembling the three diagrams proves the first conclusion since $\delta_3^{-1}\,\delta_3=\Id$ and $(\gamma')^{-1}\,\delta'_2\,\alpha'=\delta_1$.\end{proof}

The second row of \ref{TS} will be referred to as a twisted sequence. 

\section{Applications in category $\cO$: older results}
\label{sec:O}

Let $\fg$ be a finite-dimensional semisimple Lie algebra over $\C$. Category $\cO$ is the category of $\fg$-modules introduced in \cite{BGG}. For background information on category $\cO$, we will rely on \cite{H1} where the original sources and the later developments can be found.

Let $S$ be the set of simple root reflections in the Weyl group $W$. The stabilizer of a weight $\la$ under the dot action is $W_\la^\circ$. Let $w_0$ denote the longest element and let $1$ denote the identity. The Bruhat order on $W$ is denoted by $<\,$. Let $\xi$ be its characteristic function defined by
\begin{equation*}
\xi(x,y)=
\begin{cases}
1&\text{if $x\le y$ and}\cr
0&\text{otherwise.}
\end{cases}
\end{equation*}
Let $\ell$ denote the length function on $W$. If $x,y\in W$, $\ell(x,y)=\ell(y)-\ell(x)$.

The $R$-polynomials are defined in \cite[\S 7]{H2}. Let $r_p(x,y)$ denote the coefficient of $q^p$ in $(-1)^{n-p}R_{\,x,\,y}\,$ where $n=\ell(x,y)$.  A recursion for $r_p(x,y)$ begins with  $r_p(w_0,y)=0$ if $p\ne0$ and $r_0(w_0,y)=\xi(w_0,y)$. If $x<w_0$, choose an $s\in S$ so that $xs>x$. Then, for all $p$,
\begin{equation}
\label{recurse}
r_p(x,y)=
\begin{cases}
r_p(xs,ys)&\text{ if $ys>y$,}\cr
r_p(xs,y)+r_{p-1}(xs,y)-r_{p-1}(xs,ys)&\text{ if $ys<y$.}
\end{cases}
\end{equation}

The following properties of the $r_p$ can be proved by induction or translated from properties of the $R$-polynomials in \cite[\S 7]{H2}. If $r_p(x,y)\ne0$, then $x\le y$ and $0\le p\le \ell(x,y)$. Also $r_0=\xi$ and, if $n=\ell(x,y)$, $r_p(x,y)=r_{n-p}(x,y)$.

Specializing \eqref{recurse} to $p=1$, $r_1(w_0,y)=0$ and, if $xs>x$,
\begin{equation}
\label{recurse2}
r_1(x,y)=
\begin{cases}
r_1(xs,ys)&\text{ if $ys>y$,}\cr
r_1(xs,y)+1&\text{ if $ys<y$ and $xs\,{\not<}\,ys$,}\cr
r_1(xs,y)&\text{ if $ys<y$ and $xs<ys$.}
\end{cases}
\end{equation}

Choose anti-dominant integral weights $\la$ and $\mu$ so that $W_\la^\circ{=}\{e\}$ and $W_\mu^\circ=\{e,s\}$ where $s\in S$. If $x\in W$, let $M_x$ denote the Verma module with highest weight $x\cdot\la$. The block of $\cO$ with projective generator $M_{w_0}$ is $\cO_\la$ \cite[4.9]{H1}.
Here, $T$ is the translation functor $T_\la^\mu$ where $R$ is its left and right adjoint $T_\mu^\la$ \cite[7.1-2]{H1}. A module $M\,{\in}\,\cO_\la$ is $C$-acyclic if, and only if, $DM=M$ \cite[2.9]{C} and this condition is true for each $M_x$ \cite[2.8(i)]{C}.

For $x,y\in W$, write $\E\tothe p(x,y)$ for $\E\tothe p(M_x,M_y)$ and $e_p(x,y)$ for its dimension. Also, for $x$ and $z\le y$ in $W$, write $\E\tothe p(x,y/z)$ for $\E^{\,p}(M_x,M_y/M_z)$ and let $e_p(x,y/z)$ be the dimension.

Since $M_{w_0}$ is projective, $e_p(w_0,y)=0$ if $p\ne0$. By the properties of homomorphisms between Verma modules, $e_0=\xi$ \cite[5.2]{H1}, so $e_0=r_0$.  The vanishing properties also match. If $e_p(x,y)\ne0$ then $x\le y$ and $0\le p\le\ell(x,y)$ \cite[6.11]{H1}.

The twisted sequence can be used to re-prove some of the results of \cite[5.2]{GJ}.

\begin{prop}\cite[5.2.1]{GJ}
\label{GJ1}Suppose that $xs>x$ and $ys<y$. For all $p$, \[e_p(xs,y)=e_p(x,ys).\]
\end{prop}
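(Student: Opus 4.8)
The plan is to obtain the identity as a direct consequence of the Gabber--Joseph isomorphism, Theorem~\ref{GJ} (equivalently, of the relevant corner of the twisted sequence of Theorem~\ref{TS}), applied to the pair $M = M_{xs}$, $N = M_{ys}$. For this one first identifies the functors $C$, $K$, and $I$ on these particular Verma modules in terms of the coset $zW_\mu^\circ = \{z, zs\}$. The relevant facts about the wall-crossing functor $\theta = T_\mu^\la T_\la^\mu$ are classical: $\theta M_z$ depends only on the coset $\{z, zs\}$; for the \emph{shorter} element $z$ of that coset (the case $zs > z$) there is a Verma-flag short exact sequence $0 \to M_{zs} \to \theta M_z \to M_z \to 0$; and $\operatorname{Hom}(M_z, \theta M_z) \cong \operatorname{Hom}(\theta M_z, M_z) \cong \operatorname{End}(T_\la^\mu M_z) \cong \C$, the last isomorphisms coming from the two adjunctions and the fact that a Verma module has one-dimensional endomorphism ring.

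From this, since $xs$ is the \emph{longer} element of $\{x, xs\}$, the one-dimensional space $\operatorname{Hom}(M_{xs}, \theta M_{xs})$ is spanned by the inclusion of the sub-Verma in the flag above (taken with $z = x$); as $\eta_{M_{xs}}$ is a nonzero, indeed injective, element of that space it is a scalar multiple of this inclusion, so $C M_{xs} = \Coker \eta_{M_{xs}} \cong M_x$. Likewise, since $ys$ is the \emph{shorter} element of $\{ys, y\}$, the space $\operatorname{Hom}(\theta M_{ys}, M_{ys})$ is spanned by the Verma-flag quotient map $\theta M_{ys} \to M_{ys}$, so the (nonzero) map $\epsilon_{M_{ys}}$ is an epimorphism; hence $I M_{ys} = M_{ys}$ (that is, $\iota_{M_{ys}}$ is an isomorphism) and $K M_{ys} = \Ker \epsilon_{M_{ys}} \cong M_y$.

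With these identifications the proof closes immediately. Both $M_{xs}$ and $D M_{xs} = M_{xs}$ are $C$-acyclic (as recorded in the text for every $M_z$), and $I M_{ys} = M_{ys} = N$, so Theorem~\ref{GJ} applies: the map $\gamma$ there is an isomorphism $\E\tothe{p}(M_{xs}, M_y) \xrightarrow{\ \sim\ } \E\tothe{p}(M_x, M_{ys})$ for every $p$, and comparing dimensions yields $e_p(xs, y) = e_p(x, ys)$. The one delicate point --- and the place where an error would propagate --- is fixing the orientation of the Verma-flag sequence for $\theta M_z$, i.e.\ which coset element is the submodule, since exactly that decides whether $C M_{xs}$ and $K M_{ys}$ emerge as $M_x$ and $M_y$ rather than as the other coset Verma or a dual Verma. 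This is classical and can be settled either by citing the structure of $\theta M_z$ from \cite{H1} or directly from the one-dimensionality of the $\operatorname{Hom}$-spaces above together with the fact that a nonzero homomorphism between Verma modules is an embedding.
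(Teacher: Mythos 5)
Your proposal is correct and follows essentially the same route as the paper: both reduce to the identifications $CM_{xs}=M_x$, $IM_{ys}=M_{ys}$, $KM_{ys}=M_y$ and then invoke the Gabber--Joseph isomorphism (the paper cites Corollary~\ref{GJ'}, you use $\gamma$ from Theorem~\ref{GJ} directly, which is equivalent here since $IN=N$). The only difference is that you derive the identifications from the Verma flag of $\theta M_z$ and the one-dimensionality of the relevant Hom-spaces, whereas the paper simply cites \cite[3.5]{C} for them.
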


\begin{proof}Let $M=M_{xs}$ and $N=M_{ys}$.
Then $CM=M_x$, $IN=N$, and $KN=M_y$ \cite[3.5]{C}. By \ref{GJ'}, $\E(xs,y)$ is isomorphic to $\E(x,ys)$.
\end{proof}

Suppose that $xs>x$ and $ys<y$. Apply \ref{TS} with $M=M_{xs}$ and $N=M_y$. Then 
$IN=M_{ys}$, $CM=M_x$, and $KN=M_y$. There is a commutative diagram with exact rows,
\begin{equation}
\label{TS1}
\begin{CD}
@>>>\E\tothe{-}\!(xs,y/ys)@>\delta>>\E(xs,ys)@>\alpha>>\E(xs,y)@>\kappa>>\E(xs,y/ys)@>>>\\
@.@|@V\delta_1VV@V\delta_2VV@|@.\\
@>>>\E\tothe{-}\!(xs,y/ys)@>d>>\E\tothe{+}\!(xs,y)%
@>\gamma>>\E\tothe{+}\!(x,y)@>\chi>>\E(xs,y/ys)@>>>.
\end{CD}
\end{equation}
The following result is the twisted equivalent of \cite[5.2.3]{GJ}.

\begin{prop}
\label{GJ2}Suppose that $xs>x$ and $ys<y$. For all $p$,
\[e_p(x,y)-e_p(xs,y)\ge e_{p-1}(xs,y)-e_{p-1}(xs,ys)\]
and this is an equality if, and only if, $\Ker d^{\,p-1}=\Ker\delta^{\,p-1}$ and 
$\Ker d^{\,p-2}=\Ker\delta^{\,p-2}$.
\end{prop}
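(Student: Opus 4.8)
The plan is to treat the two rows of \eqref{TS1} as long exact sequences of finite-dimensional vector spaces, to read off from each the resulting Euler-characteristic relation, and then to compare the two relations by means of the single commutative square of \eqref{TS1} that links $\delta$ to $d$. All the $\E$-groups that occur are finite-dimensional, since every module in $\cO$ has finite length, so dimension is additive along exact sequences. Recall also, from the last clause of Theorem \ref{TS} together with the fact that $M_{xs}$ is $C$-acyclic with $DM_{xs}=M_{xs}$, that the top row of \eqref{TS1} is the long exact sequence obtained by applying $\hom(M_{xs},-)$ to $M_{ys}\hookrightarrow M_y\onto M_y/M_{ys}$, while the bottom row is exact by construction.

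First I would extract two dimension identities. From the portion
\[
\E^{p-2}(xs,y/ys)\xrightarrow{\delta^{p-2}}\E^{p-1}(xs,ys)\xrightarrow{\alpha}\E^{p-1}(xs,y)\xrightarrow{\kappa}\E^{p-1}(xs,y/ys)\xrightarrow{\delta^{p-1}}\E^{p}(xs,ys)
\]
of the top row, and the parallel portion
\[
\E^{p-2}(xs,y/ys)\xrightarrow{d^{p-2}}\E^{p}(xs,y)\xrightarrow{\gamma}\E^{p}(x,y)\xrightarrow{\chi}\E^{p-1}(xs,y/ys)\xrightarrow{d^{p-1}}\E^{p+1}(xs,y)
\]
of the bottom row, a rank--nullity count yields
\begin{align*}
e_{p-1}(xs,y)-e_{p-1}(xs,ys)&=\dim\Ker\delta^{p-1}+\dim\Ker\delta^{p-2}-e_{p-2}(xs,y/ys),\\
e_{p}(x,y)-e_{p}(xs,y)&=\dim\Ker d^{p-1}+\dim\Ker d^{p-2}-e_{p-2}(xs,y/ys).
\end{align*}
The subtracted term $e_{p-2}(xs,y/ys)$ is literally the same in both lines: the vertical maps of \eqref{TS1} at the slots labelled $\E^{-}(xs,y/ys)$ and $\E(xs,y/ys)$ are the identity, so the two rows share those groups.

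Subtracting the first identity from the second, the difference between the two sides of the asserted inequality equals $\bigl(\dim\Ker d^{p-1}-\dim\Ker\delta^{p-1}\bigr)+\bigl(\dim\Ker d^{p-2}-\dim\Ker\delta^{p-2}\bigr)$. Now the leftmost square of \eqref{TS1}, whose vertical arrows are $\Id$ and $\delta_1$, says that $d^{q}=\delta_1\circ\delta^{q}$ for $q=p-1$ and $q=p-2$; hence $\Ker\delta^{q}\subseteq\Ker d^{q}$, so both of the parenthesised differences above are $\ge 0$. This gives the inequality, and its right-hand side is $0$ exactly when both differences vanish, which---because of the kernel inclusions---is exactly the condition $\Ker d^{p-1}=\Ker\delta^{p-1}$ and $\Ker d^{p-2}=\Ker\delta^{p-2}$.

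There is no genuine obstacle here: the argument is two bookkeeping computations plus the factorisation $d=\delta_1\delta$. The only points that need attention are keeping the degree shifts straight, so that $d^{q}=\delta_1\delta^{q}$ is asserted between the correct groups, and checking that the lower term $e_{p-2}(xs,y/ys)$ really does cancel on subtraction---both being immediate once \eqref{TS1} is written out with explicit degrees.
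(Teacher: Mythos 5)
Your proposal is correct and follows essentially the same route as the paper: both arguments extract the dimension identities $e_p(x,y)-e_p(xs,y)=\dim\Ker d^{\,p-1}+\dim\Ker d^{\,p-2}-e_{p-2}(xs,y/ys)$ and its analogue for $\delta$ from the exactness of the two rows of \eqref{TS1}, and then compare them via the factorisation $d=\delta_1\,\delta$, which gives $\Ker\delta^{\,q}\subseteq\Ker d^{\,q}$ and hence both the inequality and the stated criterion for equality. The only cosmetic difference is that the paper packages the bottom-row count as a four-term exact sequence involving $\Image d^{\,p-2}$ rather than writing both Euler-characteristic identities symmetrically and subtracting.
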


\begin{proof}Since $d=\delta_1\,\delta$, $\Ker\delta\subseteq\Ker d$. Identify $\E$ with $\E\tothe{p-1}$ and $d$ with $d^{\,p-2}$ in diagram \eqref{TS1}. Because the second row is exact, there is a short exact sequence
\begin{equation*}
\begin{CD}
0@>>>\Image d^{\,p-2}@>>>\E\tothe{p}(xs,y)@>>>\E\tothe{p}(x,y)@>>>\Ker d^{\,p-1}@>>>0.
\end{CD}
\end{equation*}
Then
\begin{align*}
e_p(x,y)-e_p(xs,y)&=\dim\Ker d^{p-1}-(e_{p-2}(xs,y/ys)-\dim\Ker d^{p-2})\cr
&\ge\dim\Ker \delta^{p-1}-(e_{p-2}(xs,y/ys)-\dim\Ker \delta^{p-2})\cr
&=e_{p-1}(xs,y)-e_{p-1}(xs,ys),
\end{align*}
where the last equality uses the exactness of the first row of \eqref{TS1}.
\end{proof}

\begin{cor}\cite[3.9]{C}
\label{GJ3}
Suppose that $xs>x$, $ys<y$, and $xs\not<ys$. For all $p$, 
\[e_p(x,y)=e_p(xs,y)+e_{p-1}(xs,y).\]
\end{cor}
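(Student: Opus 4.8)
The plan is to deduce Corollary~\ref{GJ3} from Proposition~\ref{GJ2}, by showing that under the extra hypothesis $xs\not<ys$ the equality clause of that proposition is automatically satisfied.

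The first point to record is that $xs\not<ys$ in fact forces $xs\not\le ys$: the case $xs=ys$ cannot occur here, since $xs>x$ and $ys<y$ give $x=(xs)s$ and $y=(ys)s$, so $xs=ys$ would imply $x=y$ and then the contradiction $xs>x=y>ys=xs$. With $xs\not\le ys$, the vanishing property recorded just before Proposition~\ref{GJ1} yields $\E\tothe{q}(xs,ys)=0$, hence $e_q(xs,ys)=0$, for every $q$.

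Next I would feed this into diagram~\eqref{TS1}. In the first row the map $\delta$ takes values in $\E\tothe{q}(xs,ys)=0$, so $\delta^{\,q}=0$ and $\Ker\delta^{\,q}$ is all of $\E\tothe{q}(xs,y/ys)$ for every $q$. Since $d=\delta_1\,\delta$ one has $\Ker\delta^{\,q}\subseteq\Ker d^{\,q}$, and the reverse inclusion is automatic once $\Ker\delta^{\,q}$ is everything; hence $\Ker d^{\,q}=\Ker\delta^{\,q}$ for all $q$, in particular for $q=p-1$ and $q=p-2$. Those are exactly the conditions under which Proposition~\ref{GJ2} gives an equality, so
\[
e_p(x,y)-e_p(xs,y)=e_{p-1}(xs,y)-e_{p-1}(xs,ys)=e_{p-1}(xs,y),
\]
the last step using $e_{p-1}(xs,ys)=0$. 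Since $p$ was arbitrary, this is the assertion.

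There is essentially no obstacle beyond the single observation that $\delta$ — and therefore $d$ — is forced to vanish; everything else is bookkeeping with exact sequences already in hand. If one preferred not to invoke Proposition~\ref{GJ2}, one could argue directly inside \eqref{TS1}: $\E(xs,ys)=0$ makes $\kappa$ an isomorphism $\E\tothe{q}(xs,y)\cong\E\tothe{q}(xs,y/ys)$; commutativity of the right-hand square ($\chi\,\delta_2=\kappa$) then makes $\chi$ surjective; exactness of the twisted sequence forces $d=0$; and the second row breaks into short exact sequences $0\to\E\tothe{p}(xs,y)\xrightarrow{\gamma}\E\tothe{p}(x,y)\xrightarrow{\chi}\E\tothe{p-1}(xs,y/ys)\to0$, from which the dimension identity follows. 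Routing through Proposition~\ref{GJ2} is, however, the shorter path.
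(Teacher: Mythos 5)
Your proposal is correct and follows essentially the same route as the paper: the hypothesis $xs\not<ys$ (together with $xs\ne ys$) kills $\E(xs,ys)$ in all degrees, so $\delta=0$ and $d=\delta_1\,\delta=0$, the equality criterion of Proposition~\ref{GJ2} is met, and the term $e_{p-1}(xs,ys)$ drops out. Your extra care in ruling out $xs=ys$ is a welcome elaboration of a step the paper leaves implicit, but the argument is the same.
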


\begin{proof}
Because $\E(xs,ys)=0$, $\delta=0$ and $d=0$. The conditions for equality in \ref{GJ2} are satisfied.
\end{proof}

These results led naturally to the conjecture that $e_p=r_p$ for all $p$ \cite[3.1]{C}. It was soon discovered that there are examples where $r_p(x,y)$ is negative \cite{B}, so equality in \ref{GJ2} can not hold in general. One easy consequence of \cite[5.2.3]{GJ} is that $r_1$ is, at least, a lower bound for $e_1$. (Later, it will be shown that $e_1\ne r_1$.)

\begin{prop}
\label{e1r1}
$e_1\ge \,r_1$
\end{prop}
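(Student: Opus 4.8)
The plan is to induct on $\ell(x,y)$ using the recursion \eqref{recurse2} for $r_1$ in parallel with the inequalities coming from the twisted sequence. The base case is $x = w_0$: there $e_1(w_0,y) = 0 = r_1(w_0,y)$, so equality (hence the inequality) holds. For the inductive step, fix $x < w_0$ and choose $s \in S$ with $xs > x$, so that $\ell(xs,y) < \ell(x,y)$ whenever $ys \le y$, and we may assume $e_1 \ge r_1$ for all pairs of strictly smaller length difference.

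First I would dispose of the case $ys > y$. Here translation by $T_\mu^\la$ through the wall $s$ gives $\E\tothe p(x,y) \cong \E\tothe p(xs,ys)$ — this is \ref{GJ1} applied with the roles arranged so that $xs > x$, $ys > y$ becomes the situation $(xs)s < xs$, hmm, rather: one applies \ref{GJ1} reading the pair $(x, ys)$ with $(ys)s = y < ys$ to get $e_p(xs,y) = e_p(x,ys)$, but the case at hand is $ys>y$ so instead invoke the standard translation equivalence (the $\cO_\la \simeq \cO_\mu$-level statement behind \ref{GJ1}) to identify $e_1(x,y) = e_1(xs,ys)$. Since $r_1(x,y) = r_1(xs,ys)$ by \eqref{recurse2} and $\ell(xs,ys) = \ell(x,y)$ — so this is not a length reduction — I would instead descend on a secondary induction (e.g.\ on $\ell(y)$, noting $ys>y$ cannot persist, or simply cite that the $ys>y$ case reduces to the $ys<y$ case after swapping $s$-orbits). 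The cleanest route: when $ys > y$, pick $s$ adapted to $y$ instead; since $x < w_0$ there is always some simple reflection raising $x$, and if every such $s$ also raises $y$ one argues directly. I expect this bookkeeping to be the main obstacle — matching the case division of \eqref{recurse2} to a well-founded induction without circularity.

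The heart of the argument is the case $ys < y$. Apply \ref{TS} with $M = M_{xs}$, $N = M_y$, giving the diagram \eqref{TS1}. Specializing \ref{GJ2} to $p = 1$ yields
\[
e_1(x,y) - e_1(xs,y) \ge e_0(xs,y) - e_0(xs,ys) = \xi(xs,y) - \xi(xs,ys).
\]
If $xs \not< ys$, then $\xi(xs,ys) = 0$ and in fact \ref{GJ3} gives $e_1(x,y) = e_1(xs,y) + e_0(xs,y) = e_1(xs,y) + \xi(xs,y)$; comparing with \eqref{recurse2} ($r_1(x,y) = r_1(xs,y) + 1$ when $xs \not< ys$, noting $xs \le y$ since the terms are nonzero) and using the inductive bound $e_1(xs,y) \ge r_1(xs,y)$ closes this subcase. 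If $xs < ys$, then $\xi(xs,ys) = 1 = \xi(xs,y)$, so the right-hand side above is $0$ and we get $e_1(x,y) \ge e_1(xs,y) \ge r_1(xs,y) = r_1(x,y)$ by \eqref{recurse2}. That completes the induction.

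The one gap to watch is ensuring that in the $ys<y$ subcases the relevant $R$-polynomial identities from \eqref{recurse2} are being invoked with the correct hypotheses on whether $xs < ys$, and that when $e_1(xs,y)$ or $\xi(xs,y)$ vanishes the stated recursion still reads off correctly (it does, since both sides are then governed by the support condition $x \le y$). The main obstacle, as noted, is organizing the induction so the $ys > y$ branch — where lengths do not decrease — is handled; I would resolve it by always selecting $s \in S$ with $xs > x$ \emph{and} $ys < y$ when such an $s$ exists, and showing that if no such $s$ exists then $e_1(x,y)$ and $r_1(x,y)$ can be computed by an alternate reduction (or are both forced to agree with a known value), so that the recursion is driven entirely through the $ys<y$ case.
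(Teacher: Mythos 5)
Your handling of the heart case $ys<y$ is essentially the paper's (apply \ref{GJ2} at $p=1$ and compare with the $r_1$-recursion using $e_0=r_0$), but the induction you set up is not well-founded, and the $ys>y$ branch --- which you correctly flag as the obstacle --- is left genuinely unresolved. Neither of your proposed repairs works as stated: it is not always possible to choose $s\in S$ with $xs>x$ \emph{and} $ys<y$ (for instance $x=s_1$, $y=s_2s_1$ in type $\text{A}_2$: the only right descent of $y$ is $s_1$, and $xs_1<x$), so the recursion cannot be driven entirely through the $ys<y$ case, and the promised ``alternate reduction'' for such pairs is exactly the missing content. Also, no appeal to a separate translation equivalence is needed in the $ys>y$ branch: \ref{GJ1} applied to the pair $(x,ys)$ (whose relevant products satisfy $xs>x$ and $(ys)s=y<ys$) gives $e_1(x,y)=e_1(xs,ys)$ directly.

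The fix is to change the induction variable: do not induct on $\ell(x,y)$, but downward on $x$ in the Bruhat order --- concretely, assume a counterexample exists and take one with $x$ maximal, with $x=w_0$ ruled out since $e_1(w_0,y)=0=r_1(w_0,y)$. In \emph{both} branches the reduction replaces the first coordinate $x$ by $xs>x$ (to the pair $(xs,ys)$ when $ys>y$, and to the pairs $(xs,y)$, $(xs,ys)$ when $ys<y$), so maximality of $x$ supplies the needed bound $e_1\ge r_1$ for those pairs even though $\ell(xs,ys)=\ell(x,y)$. With that change your $ys<y$ computation goes through; the paper in fact invokes \eqref{recurse} with $e_0=r_0$ uniformly, so the subdivision into $xs<ys$ versus $xs\not<ys$ (and the attendant worry about whether $xs\le y$) is unnecessary.
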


\begin{proof}Assume there is a counterexample, $e_1(x,y)<r_1(x,y)$, with $x$ maximal in the Bruhat ordering.
If $x=w_0$, $e_1(w_0,y)=0=r_1(w_0,y)$ so  $x<w_0$. Choose an $s\in S$ with $xs>x$. There are two cases to consider.

If  $ys>y$, then  $e_1(x,y)=e_1(xs,ys)$ by \ref{GJ1}. Since $x$ is maximal, $e_1(xs,ys)\ge r_1(xs,ys)=r_1(x,y)$ by equation \eqref{recurse2}.

If $ys<y$, then \ref{GJ2} implies that $e_1(x,y)\ge e_1(xs,y)+e_0(xs,y)-e_0(xs,ys)$. Since $e_0=r_0$ and $x$ is maximal, 
$e_1(x,y)\ge r_1(xs,y)+r_0(xs,y)-r_0(xs,ys)=r_1(x,y)$ by \eqref{recurse}.

In either case, $e_1(x,y)\ge r_1(x,y)$, which contradicts the choice of $x$.
\end{proof}

The twisted sequence in diagram \eqref{TS1} has the same terms as the two-line spectral sequence of \cite[3.4]{C}. It is an indirect resolution of the conjecture that the coboundary of the spectral sequence should factor as $d=\delta_1\,\delta$ \cite[p. 37]{C}. It can also be substituted for the spectral sequence in many of the proofs. As an example, one result that is needed below will be re-proved here.

\begin{prop}
\label{Cn}
\cite[3.8]{C}
If $x\le y$ and $n=\ell(x,y)$, then $e_n(x,y)=1$.
\end{prop}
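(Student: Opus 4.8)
The plan is to argue by contradiction, fixing a counterexample $(x,y)$ --- that is, $x\le y$, $n=\ell(x,y)$, and $e_n(x,y)\ne1$ --- with $x$ maximal in the Bruhat order, exactly as in the proof of \ref{e1r1}. If $x=w_0$, then $x\le y$ forces $y=w_0$ and $n=0$, so $e_0(w_0,w_0)=\xi(w_0,w_0)=1$; hence $x<w_0$, and one may choose $s\in S$ with $xs>x$. Two facts will be used repeatedly: the degree vanishing (if $e_p(a,b)\ne0$ then $a\le b$ and $0\le p\le\ell(a,b)$) and the lifting property of the Bruhat order (for $x\le y$ with $xs>x$: $xs\le y$ if $ys<y$, and $xs\le ys$ if $ys>y$). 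By maximality, any pair whose first index is $xs$ fails to be a counterexample. There are two cases.

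Suppose first that $ys>y$. Then \ref{GJ1}, applied to the pair $(x,ys)$ and the reflection $s$ --- legitimate since $xs>x$ and $(ys)s=y<ys$ --- gives $e_p(xs,ys)=e_p(x,(ys)s)=e_p(x,y)$ for all $p$. Here $\ell(xs,ys)=\ell(y)-\ell(x)=n$ and $xs\le ys$, so $(xs,ys)$ is a non-counterexample with top degree $n$; hence $e_n(xs,ys)=1$ and so $e_n(x,y)=1$, a contradiction.

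Suppose instead that $ys<y$, and apply \ref{GJ2} with $p=n$. Since $\ell(xs,y)=n-1$, degree vanishing gives $e_n(xs,y)=e_{n+1}(xs,y)=0$, while $e_{n-1}(xs,y)=e_{\ell(xs,y)}(xs,y)=1$ because $xs\le y$ and $(xs,y)$ is a non-counterexample; likewise $e_{n-1}(xs,ys)=e_n(xs,ys)=0$, since either $xs\not\le ys$ or $\ell(xs,ys)=n-2$. The inequality of \ref{GJ2} then reads $e_n(x,y)-0\ge 1-0$. For equality one needs $\Ker d^{\,n-1}=\Ker\delta^{\,n-1}$ and $\Ker d^{\,n-2}=\Ker\delta^{\,n-2}$ in \eqref{TS1}; but the targets of $d^{\,n-1},\delta^{\,n-1},d^{\,n-2},\delta^{\,n-2}$ are $\E\tothe{n+1}(xs,y)$, $\E\tothe{n}(xs,ys)$, $\E\tothe{n}(xs,y)$, $\E\tothe{n-1}(xs,ys)$, all zero by the above, so each of the four kernels is the whole source and the equality criterion holds. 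Thus $e_n(x,y)=1$, again a contradiction, so no counterexample exists.

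The only step needing genuine care is the last one: checking that the equality criterion of \ref{GJ2} is automatically met in the top degree, which reduces to the vanishing of those four $\Ext$-targets for degree reasons, together with the correct uses of the Bruhat lifting property so that the inductive hypothesis really applies to $(xs,y)$ and $(xs,ys)$. Alternatively, one can read the conclusion directly off \eqref{TS1}: since $\E\tothe{n}(xs,y)=\E\tothe{n+1}(xs,y)=0$, the second row yields $\E\tothe{n}(x,y)\cong\E\tothe{n-1}(xs,y/ys)$, and the long exact sequence of $0\to M_{ys}\to M_y\to M_y/M_{ys}\to 0$, which by \ref{TS} is the first row of \eqref{TS1}, identifies $\E\tothe{n-1}(xs,y/ys)$ with $\E\tothe{n-1}(xs,y)$, of dimension one.
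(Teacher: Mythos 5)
Your proof is correct and follows the paper's argument almost exactly: the same downward induction on $x$ in the Bruhat order, the same base case at $w_0$, the same use of \ref{GJ1} when $ys>y$, and the same reliance on diagram \eqref{TS1} plus degree vanishing when $ys<y$. The only cosmetic difference is that in the second case you route through \ref{GJ2} and verify its equality criterion, whereas the paper reads the isomorphism $\delta_2\colon\E\tothe{n-1}(xs,y)\to\E\tothe{n}(x,y)$ directly off the diagram --- which is essentially the ``alternative'' you sketch at the end.
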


\begin{proof}Suppose that $x\le y$ and assume that there is a counterexample with $x$ maximal. If $x=w_0$, then $y=w_0$, $n=0$ and $e_0(w_0,w_0)=1$ so  $x<w_0$. Choose an $s\in S$ so that $xs>x$.  There are two cases to consider.

If $ys>y$, and $e_n(x,y)=e_n(xs,ys)$ by \ref{GJ1}. Because $x$ is maximal and $xs\le ys$, $e_n(xs,ys)=1$.

If $ys<y$,  then consider diagram \eqref{TS1} with $\E=\E\tothe{n-1}$ and apply the vanishing properties.
\begin{equation*}
\begin{CD}
@>>>\E\tothe{-}\!(xs,y/ys)@>\delta>>0@>>>\E(xs,y)@>>>\E(xs,y/ys)@>>>0\\
@.@|@VVV@V\delta_2VV@|@.\\
@>>>\E\tothe{-}\!(xs,y/ys)@>d>>0%
@>>>\E\tothe{+}\!(x,y)@>>>\E(xs,y/ys)@>>>0.
\end{CD}
\end{equation*}
Then $\delta_2$ is an isomorphism, so $e_n(x,y)=e_{n-1}(xs,y)$.  But $e_{n-1}(xs,y)=1$ since $xs\le y$ and $x$ is maximal.

In either case,  $e_n(x,y)=1$, which contradicts the choice of $x$.
\end{proof}

In the remainder of this section, the recursive calculation of $e_{n-1}(x,y)$ where $n=\ell(x,y)$ will be considered. Suppose that $x<xs<ys<y$ for some $s\in S$. Applying diagram \eqref{TS1} with $\E=\E\tothe{n-2}$ yields
\begin{equation}
\label{Dn-2}
\begin{CD}
@>>>\E\tothe{-}\!(xs,y/ys)@>\delta>>\E(xs,ys)@>>>\E(xs,y)@>>>\E(xs,y/ys)@>>>0\\
@.@|@V\delta_1VV@VVV@|@.\\
@>>>\E\tothe{-}\!(xs,y/ys)@>d>>\E\tothe{+}\!(xs,y)%
@>>>\E\tothe{+}\!(x,y)@>>>\E(xs,y/ys)@>>>0.
\end{CD}
\end{equation}
By \ref{Cn}, $e_{n-2}(xs,ys)=e_{n-1}(xs,y)=1$ so that $\delta_1$ is an isomorphism or zero.
But $\delta_1$ is part of the exact sequence
\begin{equation*}
\begin{CD}
\E\tothe{n-2}\!(xs,ys)@>\delta_1>>\E\tothe{n-1}\!(xs,y)@>>>\E\tothe{n-1}\!(M_{xs},\theta M_y)@>>>0,
\end{CD}
\end{equation*}
showing that $\delta_1$ is an isomorphism, if and only if, $\E\tothe{n-1}(M_{xs},\theta M_y)$ is zero. By the adjoint pairing $(T,R)$, $\E\tothe{n-1}(M_{xs},\theta M_y)$ is isomorphic to $\E\tothe{n-1}(TM_{xs},TM_y)$. The vanishing behavior of this singular extension group determines whether $d$ is zero or surjective. This suggests a conjecture on singular vanishing.

\begin{conj}
\label{C3}
If $x<xs<ys<y$, then $\E\tothe{n-1}\!(TM_{xs},TM_y)=0$, where $n=\ell(x,y)$. 
\end{conj}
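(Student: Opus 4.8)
As noted just above, the conjecture is equivalent to asking that the connecting homomorphism $\delta_1\colon\E\tothe{n-2}(xs,ys)\to\E\tothe{n-1}(xs,y)$ of the canonical sequence $KM_y\hookrightarrow\theta M_y\onto IM_y$, that is, of $M_y\hookrightarrow\theta M_y\onto M_{ys}$, be an isomorphism. Since $\ell(xs,ys)=n-2$ and $\ell(xs,y)=n-1$, Proposition~\ref{Cn} makes both of these groups one dimensional, so it is the same to ask that $\delta_1\ne0$. Now $\delta_1$ is Yoneda composition with the class $\zeta\in\E\tothe1(ys,y)$ of the sequence $M_y\hookrightarrow\theta M_y\onto M_{ys}$, and this class generates the one-dimensional group $\E\tothe1(ys,y)$, because $\theta M_y=RTM_y$ is indecomposable (the standard indecomposability of a wall-crossing image of a Verma module), so the sequence does not split. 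Hence the conjecture is equivalent to the non-degeneracy of the Yoneda pairing
\[
\E\tothe{n-2}(xs,ys)\otimes\E\tothe1(ys,y)\longrightarrow\E\tothe{n-1}(xs,y).
\]

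The plan for this is to deduce it from the general statement that, for $a\le b$ in $W$ and any saturated chain $a=z_0<z_1<\dots<z_m=b$ (so $\ell(z_i,z_{i+1})=1$ and $m=\ell(a,b)$), the Yoneda product $\zeta_0\zeta_1\cdots\zeta_{m-1}$ of generators $\zeta_i$ of the one-dimensional groups $\E\tothe1(z_i,z_{i+1})$ is a nonzero element of the one-dimensional group $\E\tothe{m}(a,b)$. Granting this, one chooses a saturated chain from $xs$ to $y$ whose final step is $ys<y$, i.e. the reflection $s$; its initial segment from $xs$ to $ys$ exhibits a generator of $\E\tothe{n-2}(xs,ys)$, the last class is a generator of $\E\tothe1(ys,y)$ (and $\zeta$ is one such), and the full product generates $\E\tothe{n-1}(xs,y)$, which is precisely the pairing above being nonzero. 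To prove the general statement I would invoke the Koszulity, and standard Koszulity, of the block (Beilinson--Ginzburg--Soergel): standard modules then have linear projective resolutions, so degree-one extension classes have internal degree one and the top extension group $\E\tothe{m}(a,b)$ is supported in internal degree $m$, whence a product of $m$ degree-one classes along a saturated chain lands in exactly that degree; an induction on $m$, using that the extension complexes in play are assembled from the linear resolutions, shows the product does not vanish there. A more self-contained alternative, in the spirit of Propositions~\ref{e1r1} and \ref{Cn}, would be a downward induction on $\ell(x)$: pick a simple reflection $t$ with $xt>x$ and run the Gabber--Joseph and twisted-sequence constructions of the preceding two sections --- for $t\ne s$, inside $\cO_\mu$ with the $t$-wall-crossing functor --- to reduce the required vanishing for the pair $(x,y)$ to the same vanishing for a first argument strictly larger than $x$.

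I expect the non-degeneracy of the Yoneda pairing to be the main obstacle. Because all the groups in sight are one dimensional, a single accidental vanishing of a Yoneda product is fatal, so the argument cannot be purely numerical; it must use genuinely homological input --- either the internal Koszul grading, which forces the chain product into the one degree in which the target is supported, or an induction that propagates the compatibilities of connecting homomorphisms through the Bruhat order, exactly the compatibilities the twisted sequence was designed to record. A concrete pitfall: the ``saturated chain'' statement is chain-dependent, and for an arbitrary interval one cannot always arrange the last step to be a simple reflection, so the induction must select chains adapted to the pair $(xs,y)$; the cases where $x$ admits no simple ascent besides $s$, so that one must instead descend at the $y$-end or reshuffle the chain, are where the real care is needed.
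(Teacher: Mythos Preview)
The statement you are attempting to prove is labeled a \emph{Conjecture} in the paper, and the paper offers no proof of it; it is used only as a hypothesis in Proposition~\ref{Cn-1}. So there is no paper proof to compare your proposal against. What you have written is therefore not a comparison target but an attempted resolution of an open problem the author poses.

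On the substance of your attempt: the reduction to non-degeneracy of the Yoneda pairing $\E\tothe{n-2}(xs,ys)\otimes\E\tothe{1}(ys,y)\to\E\tothe{n-1}(xs,y)$ is correct and is exactly the reformulation the paper's discussion around diagram~\eqref{Dn-2} sets up. However, your proposed proof of that non-degeneracy is incomplete in a way you yourself flag. The Koszulity argument as stated only shows that a product of $m$ degree-one classes lands in the internal degree where $\E\tothe{m}(a,b)$ is concentrated; it does not show the product is nonzero there. Your parenthetical ``an induction on $m$\dots shows the product does not vanish'' is precisely the missing step, and it is not a formality: nonvanishing of iterated Yoneda products along saturated Bruhat chains is not a known consequence of standard Koszulity, and if it were available in the literature the conjecture would presumably not be stated as such. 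The alternative induction you sketch, reducing $(x,y)$ to a pair with larger first coordinate via wall-crossing at some $t\ne s$, runs into the problem that the relevant connecting maps in the twisted sequence need not be isomorphisms in the required degrees without already assuming instances of the conjecture, so the reduction is circular unless you can supply an independent base or a different organizing principle. In short, you have correctly located the obstruction but have not removed it.
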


\begin{prop}
\label{Cn-1}Suppose that $x<y$ and let $n=\ell(x,y)$. Conjecture \ref{C3} implies that  \[e_{n-1}(x,y)=r_1(x,y).\]
\end{prop}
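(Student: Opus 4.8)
The plan is to prove $e_{n-1}(x,y)=r_1(x,y)$ by induction on $x$ in the Bruhat order, descending from $x=w_0$, exactly paralleling the inductions in Propositions~\ref{e1r1} and \ref{Cn}. The base case $x=w_0$ forces $y=w_0$, $n=0$, and both sides vanish; more generally for $x=w_0$ one has $e_{n-1}(w_0,y)=0=r_1(w_0,y)$ by projectivity of $M_{w_0}$ and the initial condition of the recursion~\eqref{recurse2}. For $x<w_0$ choose $s\in S$ with $xs>x$ and split into the two cases governing~\eqref{recurse2}.

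If $ys>y$, then $\ell(xs,ys)=\ell(x,y)=n$, so by Proposition~\ref{GJ1} we get $e_{n-1}(x,y)=e_{n-1}(xs,ys)$, which equals $r_1(xs,ys)=r_1(x,y)$ by induction (since $xs>x$) and the first line of~\eqref{recurse2}. If $ys<y$, I would invoke diagram~\eqref{Dn-2} in the form where $\E$ is identified with $\E\tothe{n-2}$. Here the key input is Conjecture~\ref{C3}: as the excerpt explains, $\E\tothe{n-1}(M_{xs},\theta M_y)\cong\E\tothe{n-1}(TM_{xs},TM_y)$, and when $x<xs<ys<y$ Conjecture~\ref{C3} makes this group vanish, forcing $\delta_1$ (hence also $d=\delta_1\delta$, after identifying degrees) to be an isomorphism or zero according to whether $\delta$ is. One then reads off from the exact rows of~\eqref{Dn-2} that the conditions for equality in Proposition~\ref{GJ2} at $p=n-1$ are met, so
\[
e_{n-1}(x,y)=e_{n-1}(xs,y)+e_{n-2}(xs,y)-e_{n-2}(xs,ys).
\]
Now $e_{n-2}(xs,y)=1$ and $e_{n-2}(xs,ys)=1$ by Proposition~\ref{Cn} (both pairs have length $n-1$), so the middle terms cancel when $xs<ys$ and contribute $+1$ when $xs\not<ys$; by induction $e_{n-1}(xs,y)=r_1(xs,y)$, and the result matches the second and third lines of~\eqref{recurse2}. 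One subtlety to handle with care is the sub-case $xs\not<ys$ with $ys<y$: there $\E(xs,ys)=0$, so $\delta=0=d$ and Corollary~\ref{GJ3} applies directly, giving $e_{n-1}(x,y)=e_{n-1}(xs,y)+e_{n-2}(xs,y)=r_1(xs,y)+1=r_1(x,y)$ without needing Conjecture~\ref{C3} at all; one should separate this off so the conjecture is only used where $x<xs<ys<y$ genuinely holds.

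I expect the main obstacle to be bookkeeping rather than conceptual: verifying that the degree shifts in~\eqref{Dn-2} line up so that ``$\delta_1$ an isomorphism or zero'' translates into precisely the equality hypotheses $\Ker d^{\,p-1}=\Ker\delta^{\,p-1}$ and $\Ker d^{\,p-2}=\Ker\delta^{\,p-2}$ of Proposition~\ref{GJ2} at $p=n-1$, and confirming that the relevant lower extension groups are forced by the vanishing range $0\le p\le\ell(x,y)$ and by Proposition~\ref{Cn} to be exactly $0$ or $1$ as needed. Once that is pinned down, the three cases of~\eqref{recurse2} are matched term-by-term and the induction closes, contradicting the existence of a maximal counterexample.
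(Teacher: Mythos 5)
Your proposal follows the paper's proof almost exactly: descending induction on $x$ in the Bruhat order, the three cases dictated by \eqref{recurse2}, Proposition \ref{GJ1} when $ys>y$, Corollary \ref{GJ3} when $ys<y$ and $xs\not<ys$, and Conjecture \ref{C3} forcing $\delta_1$ in diagram \eqref{Dn-2} to be an isomorphism when $x<xs<ys<y$ (note that under the conjecture $\delta_1$ is an isomorphism outright, since it is surjective onto the one-dimensional space $\E\tothe{n-1}(xs,y)$ and its source $\E\tothe{n-2}(xs,ys)$ is also one-dimensional by \ref{Cn}; it is not ``an isomorphism or zero according to whether $\delta$ is'').

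The one genuine defect is a degree slip in the $ys<y$ case. Here $\ell(xs,y)=n-1$ but $\ell(xs,ys)=n-2$, not $n-1$ as you assert. Consequently Proposition \ref{Cn} gives $e_{n-1}(xs,y)=1$ and $e_{n-2}(xs,ys)=1$, while the inductive hypothesis applied to the pair $(xs,y)$ gives $e_{n-2}(xs,y)=r_1(xs,y)$; it says nothing about $e_{n-1}(xs,y)$, and $e_{n-2}(xs,y)$ is not $1$ in general. You have these two justifications interchanged, citing \ref{Cn} for $e_{n-2}(xs,y)=1$ and ``induction'' for $e_{n-1}(xs,y)=r_1(xs,y)$, both of which are false as stated. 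Because the two terms enter the identity
\[
e_{n-1}(x,y)=e_{n-1}(xs,y)+e_{n-2}(xs,y)-e_{n-2}(xs,ys)
\]
as summands, the arithmetic happens to produce the correct total, but the proof as written does not establish it. With the labels corrected the argument closes exactly as in the paper: $e_{n-1}(x,y)=1+r_1(xs,y)-e_{n-2}(xs,ys)$, where $e_{n-2}(xs,ys)$ is $1$ or $0$ according as $xs<ys$ or $xs\not<ys$, matching the last two lines of \eqref{recurse2}.
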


\begin{proof}Assume there is a counterexample with $x$ maximal. Because $y\le w_0$, $x<w_0$ and there is an $s\in S$ with $xs>x$. There are three cases to consider.

If $ys>y$, \ref{GJ1} implies that $e_{n-1}(x,y)=e_{n-1}(xs,ys)$. Since $x$ is maximal and $xs<ys$ , $e_{n-1}(xs,ys)=r_1(xs,ys)=r_1(x,y)$ by equation \eqref{recurse2}.

If $ys<y$ and $xs\not<ys$, $e_{n-1}(x,y)=e_{n-1}(xs,y)+e_{n-2}(xs,y)$ by \ref{GJ3}. Since $xs\le y$, $e_{n-1}(xs,y)=1$ by \ref{Cn}.  If $xs=y$, then $n=1$ and $e_0(x,y)=r_0(x,y)=r_1(x,y)$  so $xs<y$ by the choice of $x$. Because $x$ is maximal, $e_{n-2}(xs,y)=r_1(xs,y)$. Then  $e_{n-1}(x,y)=1+r_1(xs,y)=r_1(x,y)$ by equation \eqref{recurse2}.

If $x<xs<ys<y$
and assuming that conjecture \ref{C3} is true, $\delta_1$ in diagram \eqref{Dn-2} is an isomorphism. Then $e_{n-1}(x,y)=e_{n-2}(xs,y)$. Because $x$ is maximal, $e_{n-2}(xs,y)=r_1(xs,y)=r_1(x,y)$ by equation \eqref{recurse2}.

In each case, $e_{n-1}(x,y)=r_1(x,y)$, which contradicts the choice of $x$.
\end{proof}

\section{Applications in category $\cO$: younger results}
\label{sec:ext1}

Most of the results of the last section have been known for a long time. The newer results involve $r_1$. The first new result in this direction was published by Mazorchuk in 2007.

\begin{prop}
\cite[Lemma 33]{M}
\label{thm:maz}
$e_1(1,w_0)=\left|S\right|\,.$
\end{prop}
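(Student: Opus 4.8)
The plan is to compute $e_1(1,w_0)$ recursively by inducting downward in the Bruhat order, exactly as in the proofs of \ref{e1r1}, \ref{Cn}, and \ref{Cn-1}, but now tracking the discrepancy between $e_1$ and $r_1$. First I would record the relevant value of $r_1$: since $1$ is the minimum and $w_0$ is the maximum of $W$, one has $\ell(1,w_0)=\ell(w_0)=|S_{+}|$ (the number of positive roots), and a direct application of the recursion \eqref{recurse2} shows that $r_1(1,w_0)$ counts the simple reflections, giving $r_1(1,w_0)=|S|$. (In fact $r_1(1,w_0)$ is the coefficient governing the second-highest layer of the $R$-polynomial $R_{1,w_0}$, which is classical.) So the claim is equivalent to showing that equality $e_1(1,w_0)=r_1(1,w_0)$ holds in this particular case, even though it fails in general.

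The main tool will be the twisted sequence \eqref{TS1} with $\E=\E^{0}=\hom$, specialized so that $\E^{-}$-terms vanish. Concretely, for a pair $x<xs$ with $xs>x$ and $ys<y$, the bottom row of \eqref{TS1} in degrees $0,1$ gives the exact sequence controlling $e_1(x,y)$ in terms of $e_1(xs,y)$, $e_0(xs,y)$, $e_0(xs,ys)$, and the kernels of $d^0$ and $\delta^0$; by \ref{GJ2} we get $e_1(x,y)-e_1(xs,y)\ge e_0(xs,y)-e_0(xs,ys)$ with equality iff $\Ker d^0=\Ker\delta^0$ and $\Ker d^{-1}=\Ker\delta^{-1}$. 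The degree $-1$ condition is automatic since $\E^{-1}=0$, so the only obstruction to equality is whether $\Ker d^0=\Ker\delta^0$, i.e. whether the part of $d^0$ factoring through $\E^{0}(xs,ys)$ is all of $d^0$ — equivalently whether $\delta_1^0$ is injective on the image of $\delta^0$. When $xs\not<ys$ this is vacuous (\ref{GJ3}), and when $ys>y$ it is handled by \ref{GJ1}. The delicate case is $x<xs<ys<y$, where $e_0(xs,ys)=\xi(xs,ys)=1$ and $\delta^0$ could be nonzero.

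So the key step is: starting from $x=1$, $y=w_0$, choose $s\in S$ with $s>1$ (any simple reflection works, and $sw_0<w_0$). Then I would like to peel off one simple reflection at a time and show that at the top of category $\cO$ the relevant maps $\delta_1^0$ behave like isomorphisms on the small pieces involved, so that \ref{GJ2} holds with equality and the recursion \eqref{recurse2} for $r_1$ is matched by $e_1$. The cleanest route is probably to argue by downward induction on $x$ that $e_1(x,w_0)=r_1(x,w_0)=\#\{s\in S: xs>x\}$; for $x=w_0$ both sides are $0$, and the inductive step uses \eqref{TS1}. In the case $ys=sw_0>y$? — no, $y=w_0$ is maximal so $ys<y$ always; thus only the cases $xs\not<ys$ (giving $e_1(x,w_0)=e_1(xs,w_0)+1$ via \ref{GJ3}) and $x<xs<sw_0<w_0$ arise. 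In the latter I invoke \ref{Cn-1}-style reasoning: here $n=\ell(xs,w_0)$ and the extension group $\E^{0}(xs,sw_0)$ has dimension $\xi(xs,sw_0)$, and I need the connecting map $\delta_1^0$ restricted appropriately to be injective; since $\E^{0}(M_{xs},\theta M_{w_0})\cong\E^{0}(TM_{xs},TM_{w_0})$ and $TM_{w_0}$ is again projective in the singular block $\cO_\mu$ (being the image of the projective $M_{w_0}$ under $T^{\mu}_{\la}$), this $\hom$-space is well understood, and one checks $\delta_1^0$ is injective there. Either way the recursion produces $e_1(x,w_0)=\#\{s:xs>x\}$, and at $x=1$ every $s\in S$ satisfies $s>1$, so $e_1(1,w_0)=|S|$.

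\medskip

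\noindent\emph{Where the difficulty lies.} The routine bookkeeping with \eqref{recurse2} and the exact sequences is not the obstacle; the real content is the case $x<xs<ys<y$, where one must rule out a genuine discrepancy $e_1>r_1$. In the general statement \ref{Cn-1} this is exactly what forces Conjecture \ref{C3}. The saving grace for $y=w_0$ is that $M_{w_0}$ is projective and $TM_{w_0}$ is projective in $\cO_\mu$, which kills the higher singular $\hom$ obstructions that would otherwise be uncontrolled; so the hard step is to make precise that projectivity of $M_{w_0}$ collapses the relevant piece of the twisted sequence, forcing equality in \ref{GJ2} at every stage of the recursion from $w_0$ down to $1$. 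An alternative, possibly shorter, line of attack — and the one I would try first if the inductive argument gets tangled — is to use \ref{GJ2} directly with $M=M_{sw_0}$, $N=M_{w_0}$ for a single $s$, observing $CM=M$ is false in general, so one really does need the full ladder; hence I expect the projectivity argument for $M_{w_0}$ to be the cleanest and the crux of the proof.
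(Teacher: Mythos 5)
The paper does not prove this proposition with its own machinery at all: it is imported verbatim from Mazorchuk \cite[Lemma 33]{M} and serves as the external base case from which Corollary \ref{cor:maz} is deduced by propagating equality \emph{upward} in the Bruhat order (the Lemma turns the inequality of \ref{GJ2} into an upper bound on $e_1(xs,y)$ once $e_1(x,y)=r_1(x,y)$ is known exactly). Your proposal instead tries to manufacture the base case by a \emph{downward} induction from $w_0$ to $1$, and that is where it breaks. Going downward, \ref{GJ2} only gives $e_1(x,w_0)\ge e_1(xs,w_0)+e_0(xs,w_0)-e_0(xs,w_0s)$; to descend with \emph{equality} in the case $x<xs<w_0s<w_0$ you must verify $\Ker d^{\,0}=\Ker\delta^{\,0}$, and your justification --- that $TM_{w_0}$ is projective in $\cO_\mu$, so the relevant singular group is controlled and $\delta_1$ is injective where needed --- does not work. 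Projectivity in the \emph{second} argument of $\E$ kills nothing: $\E\tothe{1}(A,P)$ need not vanish for $P$ projective, and indeed $\E\tothe{1}(M_1,M_{w_0})$ is exactly the nonzero group the proposition computes, even though $M_{w_0}$ is projective. This unresolved case is precisely the shape of difficulty that forces Conjecture \ref{C3} elsewhere in the paper; the reason the author quotes Mazorchuk is that the twisted sequence alone only yields the one-sided bound $e_1\ge r_1$ of \ref{e1r1} in the downward direction.

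There is also a concrete error in your bookkeeping: the formula $r_1(x,w_0)=\#\{s\in S: xs>x\}$ is false except at $x=1$. In type $\mathrm{A}_2$ with $W=\langle s,t\rangle$ and $w_0=sts$, the recursion \eqref{recurse2} gives $r_1(s,w_0)=r_1(st,w_0)+1=r_0(st,w_0)+1=2$ (since $st\not<w_0t=ts$), while only one simple reflection lengthens $s$. So the target of your induction is already wrong at the intermediate stages; the correct value $r_1(1,w_0)=|S|$ is obtained in the paper from the identity $\sum_{y}R_{1,y}=q^{\ell(w_0)}$, not from a closed formula for all $r_1(x,w_0)$. To salvage your approach you would have to induct on the true quantity $r_1(x,w_0)$ and, more seriously, genuinely establish $\Ker d^{\,0}=\Ker\delta^{\,0}$ in diagram \eqref{TS1} at every step $x<xs<w_0s<w_0$ of the descent; your sketch identifies this as the crux but does not supply an argument for it.
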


\begin{cor}
\label{cor:maz}For all $x,y\in W$,
\begin{enumerate}
\item$e_1(x,w_0)=r_1(x,w_0)\,$ and
\item$e_1(1,y)=r_1(1,y)\,$.
\end{enumerate}
\end{cor}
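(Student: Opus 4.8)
The plan is to establish (i) first, by induction on $\ell(x)$ with \ref{thm:maz} furnishing the base case, and then to deduce (ii) from (i) by a short manipulation in $W$ built from \ref{GJ1}. One preliminary combinatorial input is needed: $r_1(1,w_0)=|S|$. This follows from a direct induction on the recursion \eqref{recurse2} (for instance one can show $r_1(1,y)=\#\{s\in S:s\le y\}$, and $w_0$ has full support), and it is exactly the statement that makes \ref{thm:maz} read $e_1(1,w_0)=r_1(1,w_0)$. Recall also that $e_1\ge r_1$ by \ref{e1r1} and that $e_0=r_0=\xi$.

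For (i) I would argue by induction on $\ell(x)$, the case $x=1$ being the base case just noted. Suppose $1<x\le w_0$ and choose a simple reflection $t$ with $xt<x$; then $e_1(xt,w_0)=r_1(xt,w_0)$ by the inductive hypothesis. Apply \eqref{recurse2} with $x$ replaced by $xt$, with the reflection $t$, and with $y=w_0$ (so that $(xt)t=x>xt$ and $w_0t<w_0$). If $x\not<w_0t$, then $r_1(xt,w_0)=r_1(x,w_0)+1$, while \ref{GJ3} applied to the same data gives $e_1(xt,w_0)=e_1(x,w_0)+e_0(x,w_0)=e_1(x,w_0)+1$; comparing with the inductive hypothesis yields $e_1(x,w_0)=r_1(x,w_0)$. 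If instead $x<w_0t$, then $r_1(xt,w_0)=r_1(x,w_0)$, and \ref{GJ2} gives
\[
e_1(xt,w_0)-e_1(x,w_0)\ \ge\ e_0(x,w_0)-e_0(x,w_0t)\ =\ \xi(x,w_0)-\xi(x,w_0t)\ =\ 1-1\ =\ 0 ,
\]
so $e_1(x,w_0)\le e_1(xt,w_0)$. Combining this with the inductive hypothesis and with \ref{e1r1},
\[
r_1(x,w_0)\ =\ r_1(xt,w_0)\ =\ e_1(xt,w_0)\ \ge\ e_1(x,w_0)\ \ge\ r_1(x,w_0),
\]
forcing equality. In either case $e_1(x,w_0)=r_1(x,w_0)$, completing the induction.

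For (ii): by \ref{GJ1} (taking its $x$ to be $a$ and its $y$ to be $bs$) one has $e_1(a,b)=e_1(as,bs)$ whenever $s$ is a right ascent of both $a$ and $b$, and the first case of \eqref{recurse2} gives the same identity for $r_1$. Given $y\in W$, put $u=y^{-1}w_0$; then $\ell(u)=\ell(w_0)-\ell(y)$, so $\ell(yu)=\ell(y)+\ell(u)$ and a reduced word $u=s_1\cdots s_k$ has the property that $s_{j+1}$ is a right ascent of both $s_1\cdots s_j$ and $ys_1\cdots s_j$ for every $j$. Applying the move along this word carries $(1,y)$ to $(u,w_0)$, whence $e_1(1,y)=e_1(u,w_0)$ and $r_1(1,y)=r_1(u,w_0)$; combining with part (i) gives $e_1(1,y)=r_1(1,y)$.

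The step I expect to be the main obstacle is the case $x<w_0t$ in the induction for (i): there \ref{GJ2} yields only an inequality, and without a matching lower bound one cannot conclude. That \ref{e1r1} supplies precisely the needed lower bound, and that \eqref{recurse2} forces $r_1(xt,w_0)=r_1(x,w_0)$, is what lets a squeeze close the gap. A secondary point is that one cannot instead induct on $\ell(y)$ for (ii) with $x=1$ fixed, since \ref{GJ1}--\ref{GJ3} relate $(x,y)$ only to pairs with a strictly larger first coordinate; this is why (ii) must be routed through (i). The combinatorial identity $r_1(1,w_0)=|S|$ also requires its own small argument.
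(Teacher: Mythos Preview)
Your proof is correct and follows essentially the same line as the paper's. The paper packages your inductive step for (i) as a separate lemma---``if $xs>x$, $ys<y$, and $e_1(x,y)=r_1(x,y)$, then $e_1(xs,y)=r_1(xs,y)$''---proved in one stroke via \eqref{recurse} and \ref{e1r1} rather than splitting into the two cases of \eqref{recurse2}; but this is cosmetics, since your case split is exactly what \eqref{recurse} collapses to at $p=1$. For the base identity $r_1(1,w_0)=|S|$ the paper instead reads off the coefficient of $q^{n-1}$ in the formula $\sum_{y}R_{1,y}=q^n$, which is quicker than your proposed induction. For (ii) the paper simply says ``similar'', meaning a parallel minimal-counterexample induction on $y$ using the lemma together with \ref{GJ1}; your reduction of (ii) to (i) by walking along a reduced word for $y^{-1}w_0$ is a clean alternative that avoids repeating the squeeze argument.
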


The first item of \ref{cor:maz} is equivalent to the original statement of \cite[Theorem 32]{M} (adjusting for anti-dominance and ignoring the grading). It is expressed here in terms of $r_1$. The proof of the corollary uses the following lemma.

\begin{lem}
Suppose that $xs>x$ and $ys<y$ for some $s\in S$. If $e_1(x,y)=r_1(x,y)$, then $e_1(xs,y)=r_1(xs,y)$
\end{lem}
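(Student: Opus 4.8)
The plan is to use the twisted sequence \eqref{TS1} together with the lower-bound estimates of Proposition \ref{GJ2} and Proposition \ref{e1r1}, combined with the $R$-polynomial recursion \eqref{recurse2}. Since we are given that $xs>x$ and $ys<y$, we are exactly in the situation where \eqref{TS1} applies with $M=M_{xs}$, $N=M_y$, and we want to compare $e_1(x,y)$ with $e_1(xs,y)$. Specializing \eqref{TS1} to $\E=\E\tothe{0}$ (so that the displayed groups become $\h$ and $\E\tothe 1$ terms), the key players are $e_1(xs,y)$, $e_1(x,y)$, and the lower-degree terms $e_0(xs,y)=r_0(xs,y)$, $e_0(xs,ys)=r_0(xs,ys)$, and $e_0(xs,y/ys)$.

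First I would split into the two sub-cases dictated by \eqref{recurse2}: either $xs\not<ys$ or $xs<ys$. In the case $xs\not<ys$, Corollary \ref{GJ3} gives the exact formula $e_1(x,y)=e_1(xs,y)+e_0(xs,y)$; since $e_0=r_0=\xi$ and \eqref{recurse2} reads $r_1(x,y)=r_1(xs,y)+1$ precisely when $xs\le y$ (which holds since $x\le y$, $xs>x$ forces $xs\le y$ here — one should check this using the subword characterization of Bruhat order), the hypothesis $e_1(x,y)=r_1(x,y)$ forces $e_1(xs,y)=r_1(xs,y)$ directly. In the case $xs<ys$, equation \eqref{recurse2} says $r_1(x,y)=r_1(xs,y)$, so I need $e_1(xs,y)=e_1(x,y)$. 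Here Proposition \ref{GJ2} with $p=1$ gives $e_1(x,y)-e_1(xs,y)\ge e_0(xs,y)-e_0(xs,ys)=\xi(xs,y)-\xi(xs,ys)$; since $xs<ys<y$ (using $x\le y$ and $xs<ys$), both $\xi$-values equal $1$, so the right side is $0$ and $e_1(x,y)\ge e_1(xs,y)$. For the reverse inequality I would invoke Proposition \ref{e1r1}: $e_1(xs,y)\ge r_1(xs,y)=r_1(x,y)=e_1(x,y)$, the last equality being the hypothesis. Combining, $e_1(xs,y)=e_1(x,y)=r_1(x,y)=r_1(xs,y)$, as desired.

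The main obstacle I anticipate is not any deep homological input — the twisted sequence does all the real work — but rather the bookkeeping with the Bruhat order: verifying that $xs>x$ and $x\le y$ and $ys<y$ together imply $xs\le y$ (needed so that the recursion clause $r_1(x,y)=r_1(xs,y)+1$ is the one that fires when $xs\not<ys$, and so that $\xi(xs,y)=1$), and that $xs<ys$ combined with $ys<y$ gives $\xi(xs,ys)=1$. These follow from the standard lifting/exchange properties of Coxeter groups (for instance the subword property and the fact that $ys<y$, $xs>x$, $x<y$ imply $xs<y$ — the lifting property), but they must be cited or spelled out carefully since the whole argument depends on selecting the correct branch of \eqref{recurse2}.

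\begin{proof}
Apply diagram \eqref{TS1} with $M=M_{xs}$ and $N=M_y$, specialized to $\E=\E\tothe{0}$. By the lifting property of the Bruhat order, $x\le y$, $xs>x$, and $ys<y$ imply $xs\le y$; in particular $\xi(xs,y)=1$. There are two cases according to \eqref{recurse2}.

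Suppose first that $xs\not<ys$. Then \ref{GJ3} gives $e_1(x,y)=e_1(xs,y)+e_0(xs,y)=e_1(xs,y)+1$, since $e_0=\xi$ and $xs\le y$. On the other hand \eqref{recurse2} gives $r_1(x,y)=r_1(xs,y)+1$. Subtracting the hypothesis $e_1(x,y)=r_1(x,y)$ yields $e_1(xs,y)=r_1(xs,y)$.

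Suppose instead that $xs<ys$. Then, since $ys<y$, also $xs<y$, so $\xi(xs,y)=\xi(xs,ys)=1$. By \ref{GJ2} with $p=1$,
\[
e_1(x,y)-e_1(xs,y)\ge e_0(xs,y)-e_0(xs,ys)=\xi(xs,y)-\xi(xs,ys)=0,
\]
so $e_1(xs,y)\le e_1(x,y)$. Conversely, by \ref{e1r1} and \eqref{recurse2},
\[
e_1(xs,y)\ge r_1(xs,y)=r_1(x,y)=e_1(x,y),
\]
the last equality being the hypothesis. Hence $e_1(xs,y)=e_1(x,y)=r_1(x,y)=r_1(xs,y)$.

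In either case $e_1(xs,y)=r_1(xs,y)$.
\end{proof}
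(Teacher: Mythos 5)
Your proof is correct, but the paper's is organized quite differently and the comparison is instructive. The paper argues by contraposition in a single stroke: if $e_1(xs,y)\ne r_1(xs,y)$, then Proposition \ref{e1r1} upgrades this to the strict inequality $e_1(xs,y)>r_1(xs,y)$, and feeding that into the $p=1$ case of Proposition \ref{GJ2} gives
\[
e_1(x,y)\ \ge\ e_1(xs,y)+e_0(xs,y)-e_0(xs,ys)\ >\ r_1(xs,y)+r_0(xs,y)-r_0(xs,ys)\ =\ r_1(x,y),
\]
the last equality being the $ys<y$ clause of \eqref{recurse} at $p=1$. Because $e_0=r_0$ is used symbolically, there is never any need to evaluate $\xi(xs,y)$ or $\xi(xs,ys)$, hence no case split on $xs<ys$ versus $xs\not<ys$ and no appeal to the lifting property. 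Your version reaches the same conclusion by splitting along the branches of \eqref{recurse2}, invoking \ref{GJ3} when $xs\not<ys$ and \ref{GJ2} together with \ref{e1r1} when $xs<ys$; this is valid and has the mild virtue of making explicit which recursion clause fires, but it pays for it with exactly the Bruhat-order bookkeeping you anticipated. One small loose end in your write-up: the lemma does not hypothesize $x\le y$, which your appeal to the lifting property presupposes. You should either dispose of the case $x\not\le y$ separately (there $e_1(x,y)=r_1(x,y)=e_1(xs,y)=r_1(xs,y)=0$ by the vanishing properties, since $xs\le y$ would force $x\le y$) or note that it is vacuous; the paper's contrapositive needs no such remark.
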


\begin{proof}Suppose that  $e_1(xs,y)\ne r_1(xs,y)$. By \ref{e1r1}, $e_1(xs,y)>r_1(xs,y)$. Using \ref{GJ2} and \ref{recurse},
\begin{align*}
e_1(x,y)&\ge e_1(xs,y)+e_0(xs,y)-e_0(xs,ys)\\
&>r_1(xs,y)+r_0(xs,y)-r_0(xs,ys)=r_1(x,y),
\end{align*}
so $e_1(x,y)\ne r_1(x,y)$
\end{proof}

\begin{proof}[Proof of the corollary]
To show that $e_1(1,w_0)=r_1(1,w_0)$, apply \cite[7.10(20)]{H2} with $x=1$ and $w=w_0$ to get
\begin{equation*}
\sum_{1\le y\le w_0}R_{1,y}=q^n,
\end{equation*}
where $n=\ell(1,w_0)$. The coefficient of $q^{n-1}$ on the left-hand side is
\[
(-1)^1r_{n-1}(1,w_0)+\left|S\right|,
\] 
so
$r_1(1,w_0)=r_{n-1}(1,w_0)=\left|S\right|$.

To prove item (i), assume that there is a counterexample with $x$ minimal. Then $x>1$ and there is an $s\in S$ with $xs<x$.  By minimality of $x$,  $e_1(xs,w_0)=r_1(xs,w_0)$. The lemma implies that
$e_1(x,w_0)=r_1(x,w_0)$, contradicting the choice of $x$.

The proof of item (ii) is similar.
\end{proof}

The next development was Noriyuki Abe's preprint that originally appeared on the ArXiv in 2010. Let $v(x,y)=e_1(x,y)-e_0(x,w_0/y)$ if $x\le y$ and let $v(x,y)=0$ if $x\not\le y$. If $x\le y$, then $v(x,y)=\dim V(w_0x,w_0y)$ in Abe's notation. Then \cite[theorem 4.4]{A1} becomes $v=r_1$. As stated, the theorem is not true. There are $16$ pairs $(x,y)$ in type $\text{B}_3$ with $r_1(x,y)=4$ but, by definition, $v\le3$ \cite[Theorem 1.1(1)]{A1}. Abe's recursion for $V$ \cite[Theorem 4.3]{A1} does imply that $v\le r_1$ (by comparison with \ref{recurse2}). Then, combined with \ref{e1r1}, $v\le r_1\le e_1$ or
\begin{equation*}
r_1(x,y)\le e_1(x,y)\le r_1(x,y)+e_0(x,w_0/y).
\end{equation*}
Note that $e_0(1,w_0/y)=0$ and $e_0(x,w_0/w_0)=0$, so Abe's inequality does generalize \ref{cor:maz}. Although $v\ne r_1$, Abe has communicated an example in type $\text{B}_3$ showing that $e_1\ne r_1$ \cite{A2}.
 
In the remainder, the twisted sequence approach will be used to prove properties of $v$ that correspond with Abe's results from \cite{A1}.

\begin{prop}
\label{AGJ1}
If $xs>x$ and $ys<y\,$, then $e_0(xs,w_0/y)=e_0(x,w_0/ys)$.
\end{prop}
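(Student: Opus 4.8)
\emph{The plan.} I would prove this by a single use of the $(C,K)$-adjunction at the level of homomorphism spaces; no connecting homomorphisms are needed because only the dimensions $e_0=\dim\h(M_\bullet,M_\bullet)$ are involved. Set $N=M_{w_0}/M_{ys}$, which makes sense since $M_{ys}\subseteq M_{w_0}$. The whole proof then reduces to the identification
\[ KN\;\cong\;M_{w_0}/M_y. \]
Indeed, granting this: since $xs>x$ we have $CM_{xs}=M_x$, and $M_{xs}$ is $C$-acyclic with $DM_{xs}=M_{xs}$, so the natural pairing $(C,K)$ gives
\[ \h(M_{xs},M_{w_0}/M_y)\cong\h(M_{xs},KN)\cong\h(CM_{xs},N)=\h(M_x,M_{w_0}/M_{ys}), \]
and comparing dimensions yields $e_0(xs,w_0/y)=e_0(x,w_0/ys)$. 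The hypothesis enters only through $CM_{xs}=M_x$, which is one of the standard values of $C$ on Verma modules and needs $xs>x$; for $xs<x$ the conclusion genuinely fails, as a rank-one computation shows.

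\emph{The key computation.} To obtain $KN\cong M_{w_0}/M_y$ I would compute $KN=\Ker(\epsilon_N\colon\theta N\to N)$ by hand. Exactness of $\theta$ gives $\theta N=\theta M_{w_0}/\theta M_{ys}$, and naturality of the counit $\epsilon$ shows that $\epsilon_N$ is induced by $\epsilon_{M_{w_0}}$, so $KN=\epsilon_{M_{w_0}}^{-1}(M_{ys})/\theta M_{ys}$. Because $ys<y$ we have $IM_{ys}=M_{ys}$, hence, again by naturality of $\epsilon$, $\epsilon_{M_{w_0}}(\theta M_{ys})=\epsilon_{M_{ys}}(\theta M_{ys})=M_{ys}$; this single fact forces both $\epsilon_{M_{w_0}}^{-1}(M_{ys})=KM_{w_0}+\theta M_{ys}$ and $KM_{w_0}\cap\theta M_{ys}=\Ker\epsilon_{M_{ys}}=KM_{ys}$, so the second isomorphism theorem gives $KN\cong KM_{w_0}/KM_{ys}$. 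Finally $KM_{w_0}=M_{w_0}$ (since $w_0s<w_0$) and $KM_{ys}=M_y$ (since $ys<y$), and left-exactness of $K$ together with $\dim\h(M_y,M_{w_0})=1$ identifies the inclusion $KM_{ys}\hookrightarrow KM_{w_0}$ with the canonical Verma inclusion $M_y\hookrightarrow M_{w_0}$; hence $KN\cong M_{w_0}/M_y$.

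\emph{Where the difficulty lies.} I expect the real care to be bookkeeping rather than anything conceptual. The four values $CM_{xs}=M_x$, $KM_{w_0}=M_{w_0}$, $KM_{ys}=M_y$, $IM_{ys}=M_{ys}$ each depend on which side of the ``$ws<w$ versus $ws>w$'' dichotomy the relevant element lies on, and one must check that the hypotheses $xs>x$, $w_0s<w_0$, $ys<y$ dictate the ones actually used. There is, pleasantly, no need to distinguish cases according to whether $x\le y$: the argument is uniform. One could instead route the argument through Corollary~\ref{GJ'} in degree zero, with the same computation of $KN$, but one then lands in $\h(M_x,I(M_{w_0}/M_{ys}))=\h(M_x,M_{w_0s}/M_{ys})$ and must still prove this equals $\h(M_x,M_{w_0}/M_{ys})$; the bare $(C,K)$-adjunction avoids that extra step.
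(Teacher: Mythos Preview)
Your proposal is correct and follows the same route as the paper. The paper sets $M=M_{xs}$, $N=M_{w_0}/M_{ys}$, obtains $KN\cong M_{w_0}/M_y$ by applying the snake lemma to the ladder
\[
\begin{CD}
0@>>>\theta M_{ys}@>>>\theta M_{w_0}@>>>\theta N@>>>0\\
@.@V\epsilon VV@V\epsilon VV@V\epsilon VV@.\\
0@>>>M_{ys}@>>>M_{w_0}@>>>N@>>>0,
\end{CD}
\]
and then invokes the $(C,K)$-adjunction exactly as you do; your explicit preimage/second-isomorphism computation of $KN$ is just the snake lemma unpacked, with the added care of identifying the inclusion $KM_{ys}\hookrightarrow KM_{w_0}$ with the canonical Verma inclusion.
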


\begin{proof}
Let $M=M_{xs}$ and $N=M_{w_0}/M_{ys}$. There is a commutative diagram with exact rows,
\begin{equation}
\label{snake}
\begin{CD}
0@>>>\theta M_{ys}@>>>\theta M_{w_0}@>>>\theta N@>>>0\\
@.@VVV@VVV@VVV@.\\
0@>>>M_{ys}@>>>M_{w_0}@>>>N@>>>0.\\
\end{CD}
\end{equation}
By the snake lemma, $KN=M_{w_0}/M_y$ . By the adjoint pairing $(C,K)$, $\h(M_{xs},KN)$ and $\h(M_x,N)$ are isomorphic.
\end{proof}

By \ref{GJ1}, if $xs>x$ and $ys<y\,$, $e_1(xs,y)=e_1(x,ys)$ which proves the following property of $v$, which corresponds to \cite[4.3(1)]{A1}.

\begin{cor}
If $xs>x$ and $ys<y\,$, then $v(xs,y)=v(x,ys)$.
\end{cor}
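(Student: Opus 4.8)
The plan is to deduce this directly from Proposition~\ref{GJ1} and Proposition~\ref{AGJ1}, with a small Bruhat-order bookkeeping step to handle the cases built into the definition of $v$. Recall that $v(xs,y)=e_1(xs,y)-e_0(xs,w_0/y)$ when $xs\le y$ and $v(xs,y)=0$ otherwise, and similarly $v(x,ys)=e_1(x,ys)-e_0(x,w_0/ys)$ when $x\le ys$ and $v(x,ys)=0$ otherwise. So the first thing to check is that, under the hypotheses $xs>x$ and $ys<y$, the conditions $xs\le y$ and $x\le ys$ are equivalent.

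To see this I would first note that $x\ne y$, since $x=y$ would force $xs=ys$, contradicting $xs>x=y>ys$. Then the lifting property of the Bruhat order (applied with $s$ a right descent of $y$ but not of $x$) shows that $x<y$ implies both $xs\le y$ and $x\le ys$; conversely $xs\le y$ gives $x<xs\le y$, and $x\le ys$ gives $x\le ys<y$. Hence $xs\le y\iff x<y\iff x\le ys$, and when $x\not\le y$ both of these inequalities fail.

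With this in hand the proof splits into two cases. If $xs\le y$ (equivalently $x\le ys$), then Proposition~\ref{GJ1} gives $e_1(xs,y)=e_1(x,ys)$ and Proposition~\ref{AGJ1} gives $e_0(xs,w_0/y)=e_0(x,w_0/ys)$, and subtracting the second equality from the first yields $v(xs,y)=v(x,ys)$. If instead $xs\not\le y$, then also $x\not\le ys$, so $v(xs,y)=0=v(x,ys)$ by definition. I do not anticipate a genuine obstacle here: the only delicate point is the equivalence $xs\le y\iff x\le ys$, which is the standard lifting property, and the rest is a direct substitution of the two cited propositions.
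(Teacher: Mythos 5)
Your argument is correct and is essentially the paper's own proof: the corollary is obtained by combining Proposition \ref{GJ1} (equality of the $e_1$ terms) with Proposition \ref{AGJ1} (equality of the $e_0(\,\cdot\,,w_0/\cdot\,)$ terms) and subtracting. Your extra care with the lifting property, showing $xs\le y\iff x\le ys$ so the two sides of the definition of $v$ fall into the same case, is a detail the paper leaves implicit but does not change the approach.
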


Next, there is another ladder diagram that links extensions of fractional Verma modules to the twisted sequence.

\begin{prop}
\label{FTS}
Suppose that $xs>x$ and $ys<y\,$. There is a commutative diagram with exact rows,
\begin{equation*}
\label{LD}
\begin{CD}
@>>>\E\tothe{-}\!(xs,y/ys)@>\delta>>\E(x,w_0/ys)@>\alpha>>\E(x,w_0/y)@>\kappa>>\E(xs,y/ys)@>>>\\
@.@|@V\delta_1VV@V\delta_2VV@|\\
@>>>\E\tothe{-}\!(xs,y/ys)@>d>>\E\tothe{+}\!(xs,y)@>\gamma>>\E\tothe{+}\!(x,y)@>\chi>>\E(xs,y/ys)@>>>,\\
\end{CD}
\end{equation*}
where the second row is the same as the second row of diagram \eqref{TS1}.
\end{prop}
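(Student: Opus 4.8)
The plan is to recognize the first row as the long exact sequence of extensions of a short exact sequence of fractional Verma modules, shifted in degree, and then to place it alongside the second row of \eqref{TS1} by means of connecting homomorphisms.

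Write $P=M_y/M_{ys}$. Since $ys<y$ this is the module $JN=\Coker\epsilon_N$ attached to $N=M_y$ in \eqref{TS1}, so $TP=0$ as in the proof of \ref{TS}, and the adjoint pairing $(R,T)$ gives $\E\tothe p(\theta M_{xs},P)\cong\E\tothe p(TM_{xs},TP)=0$ for all $p$. Applying $\h(-,P)$ to the canonical exact sequence $0\to M_{xs}\to\theta M_{xs}\to M_x\to0$, which is $DM\hookrightarrow\theta M\onto CM$ for $M=M_{xs}$ with $DM_{xs}=M_{xs}$ \cite[2.8(i)]{C} and $CM_{xs}=M_x$ \cite[3.5]{C}, yields natural isomorphisms $\E\tothe p(M_x,P)\cong\E\tothe{p-1}(M_{xs},P)$ for $p\ge1$, together with $\h(M_x,P)=0$. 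Now apply $\h(M_x,-)$ to $0\to M_y/M_{ys}\to M_{w_0}/M_{ys}\to M_{w_0}/M_y\to0$; substituting those isomorphisms into the terms $\E(M_x,M_y/M_{ys})$ and $\E\tothe{+}(M_x,M_y/M_{ys})$ turns a four-term stretch of its long exact sequence into exactly the first row of the asserted diagram.

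To build the ladder onto the second row of \eqref{TS1}, take the right-hand vertical to be the identity of $\E(xs,y/ys)$; take $\delta_2$ to be the coboundary of $0\to M_y\to M_{w_0}\to M_{w_0}/M_y\to0$ (an isomorphism $\E\tothe p(M_x,M_{w_0}/M_y)\cong\E\tothe{+}(M_x,M_y)$ for $p\ge1$ since $M_{w_0}$ is projective); and take $\delta_1$ to be the coboundary of $0\to M_{ys}\to M_{w_0}\to M_{w_0}/M_{ys}\to0$ followed by the Gabber--Joseph isomorphism $\E\tothe{+}(M_x,M_{ys})\cong\E\tothe{+}(M_{xs},M_y)$ of \ref{GJ1}. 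Commutativity of the two right-hand squares then follows from the naturality of connecting homomorphisms (and, for the $\gamma$-square, the naturality of the Gabber--Joseph map $\gamma$ in the second variable), applied to the morphisms of short exact sequences extracted from the chain $M_{ys}\subseteq M_y\subseteq M_{w_0}$ — the four sequences $M_{ys}\hookrightarrow M_y\onto M_y/M_{ys}$, $M_{ys}\hookrightarrow M_{w_0}\onto M_{w_0}/M_{ys}$, $M_y\hookrightarrow M_{w_0}\onto M_{w_0}/M_y$, $M_y/M_{ys}\hookrightarrow M_{w_0}/M_{ys}\onto M_{w_0}/M_y$ sit in one commutative web over $\Id_{M_{w_0}}$ — together with the identification $K(M_{w_0}/M_{ys})=M_{w_0}/M_y$ taken from the proof of \ref{AGJ1}. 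For the remaining square one must check that $\delta_1\,\delta$ equals the map $d$ of \eqref{TS1}; since $d$ is, by its construction in \ref{TS}, the iterated coboundary $\delta_1\,\delta$ of $M_{ys}\hookrightarrow M_y\onto M_y/M_{ys}$ computed with $\h(M_{xs},-)$, this reduces once more to naturality, supplemented by the compatibility of the shift isomorphism above and of the isomorphism of \ref{GJ1} with the coboundary maps involved.

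That last compatibility is the main obstacle. Unwound, it amounts to the standard sign-commutativity, for an object that is a short exact sequence in \emph{both} variables, of the two coboundary homomorphisms of $\Ext$ — here the first-variable sequence $M_{xs}\hookrightarrow\theta M_{xs}\onto M_x$ and a second-variable sequence of the shape $M_y\hookrightarrow\theta M_y\onto M_{ys}$ — together with the commutativity of the last square of the Gabber--Joseph diagram \ref{GJ} for the pair $(M_{xs},M_{ys})$ (which is precisely what makes \ref{GJ1} compatible with coboundaries) and the identification $\theta M_{ys}\cong\theta M_y$ induced by $M_{ys}\hookrightarrow M_y$, valid since $\theta(M_y/M_{ys})=0$. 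Treating \ref{GJ1} and the shift isomorphism as black boxes makes this bookkeeping awkward, so I expect it is cleanest to re-run the construction of \eqref{TS1} at the chain level, choosing one compatible system of projective resolutions for $M_{ys}\hookrightarrow M_y\hookrightarrow M_{w_0}$ via the horseshoe lemma, so that the twisted sequence, the fractional sequence, and all the vertical maps are read off a single commutative diagram of complexes, exactly as in the proofs of \ref{GJ} and \ref{TS}. One should also check the edge behaviour at $p=0$, where the shift and the coboundaries need not be isomorphisms, by handling the low-degree $\h$-terms directly, as in the closing lines of the proof of \ref{GJ}.
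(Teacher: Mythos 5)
Your proposal is correct and follows essentially the same route as the paper: the first row is obtained from the long exact sequence of $M_y/M_{ys}\hookrightarrow M_{w_0}/M_{ys}\onto M_{w_0}/M_y$ by shifting its end terms through the isomorphism $\E\tothe{p}(M_x,M_y/M_{ys})\cong\E\tothe{p-1}(M_{xs},M_y/M_{ys})$ (the paper's $\delta_3$, coming from $\E(\theta M_{xs},M_y/M_{ys})=0$), the verticals are connecting maps of the short exact sequences extracted from $M_{ys}\subseteq M_y\subseteq M_{w_0}$ composed with $\gamma'$, and commutativity of the squares is naturality over that web. The ``main obstacle'' you flag for the leftmost square dissolves in the paper's organization, because it reuses diagram \eqref{D3} verbatim, so that $d$ is by construction $(\gamma')^{-1}$ composed with a connecting map and the shift isomorphism, and that square reduces to the same naturality statement as the others rather than requiring a separate chain-level compatibility check.
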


\begin{proof}
The proof is similar in structure to the proof of \ref{TS}. Fix a commuting triangle of Verma module injections,
\begin{equation}
\label{tri}
\begin{CD}
M_{ys}@>>>M_y\\
@|@VVV\\
M_{ys}@>>>M_{w_0}.\\
\end{CD}
\end{equation}

\medskip
\noindent{\it Diagram 1\/:}
\begin{equation*}
\label{DA1}
\begin{CD}
@>>>\E\tothe-\!(xs,y/ys)@>\delta>>\E(x,w_0/ys)@>\alpha>>\E(x,w_0/y)@>\kappa>>\E(xs,y/ys)@>>>\\
@.@V\delta_3VV@|@|@V\delta_3VV@.\\
@>>>\E(x,y/ys)@>>>\E(x,w_0/ys)@>>>\E(x,w_0/y)@>>>\E\tothe{+}\!(x,y/ys)@>>>
\end{CD}
\end{equation*}

\medskip
\noindent The map $\delta_3$ is the same as the isomorphism $\delta_3$ from diagram \eqref{D2} with $M=M_{xs}$ and $N=M_y$. The second row is the long exact sequence associated to the exact sequence,
\[M_y/M_{ys}\hookrightarrow M_{w_0}/M_{ys}\onto M_{w_0}/M_y.\]
Define $\delta$ and $\kappa$ so that the diagram commutes. This produces a commutative diagram with exact rows.

\medskip
\noindent{\it Diagram 2\/:}
\begin{equation*}
\label{DA2}
\begin{CD}
@>>>\E(x,y/ys)@>>>\E(x,w_0/ys)@>>>\E(x,w_0/y)@>\delta_7>>\E\tothe{+}\!(x,y/ys)@>>>\\
@.@|@V\delta_5VV@V\delta_6VV@|@.\\
@>>>\E(x,y/ys)@>\delta_4>>\E\tothe+\!(x,ys)@>>>\E\tothe+\!(x,y)@>>>\E\tothe+\!(x,y/ys)@>>>
\end{CD}
\end{equation*}

\medskip
\noindent This is a commutative diagram with exact rows where $\delta_k$, $4\le k\le7$ are natural connecting maps (all derived from rotations of diagram \eqref{tri}). For example, the middle square commutes because of the short ladder,
\begin{equation*}
\begin{CD}
0@>>>M_{ys}@>>>M_{w_0}@>>>M_{w_0}/M_{ys}@>>>0\\
@.@VVV@|@VVV@.\\
0@>>>M_y@>>>M_{w_0}@>>>M_{w_0}/M_y@>>>0.\\
\end{CD}
\end{equation*}

\medskip
\noindent{\it Diagram 3\/}:
\begin{equation*}
\label{DA3}
\begin{CD}
@>>>\E(x,y/ys)@>>>\E\tothe+\!(x,ys)@>>>\E\tothe+\!(x,y)@>>>\E\tothe+\!(x,y/ys)@>>>\\
@.@A\delta_3AA@A\gamma'AA@|@A\delta_3AA@.\\
@>>>\E\tothe-\!(xs,y/ys)@>d>>\E\tothe+\!(xs,y)@>\gamma>>\E\tothe+\!(x,y)@>\chi>>\E(xs,y/ys)@>>>
\end{CD}
\end{equation*}

\medskip\noindent
This is a commutative diagram with exact rows because it is diagram \eqref{D3} with $M=M_{xs}$ and $N=M_y$. Since $\gamma'$ is an isomorphism, assembling the diagrams completes the proof.
\end{proof}

Applying the same argument as in the proof of \ref{GJ2} yields the following inequality.

\begin{prop}
\label{AGJ2}Suppose that $xs>x$ and $ys<y$. For all $p$,
\[e_p(x,y)-e_p(xs,y)\ge e_{p-1}(x,w_0/y)-e_{p-1}(x,w_0/ys).\]
This is an equality if, and only if, $\Ker d^{\,p-1}=\Ker\delta^{\,p-1}$ and 
$\Ker d^{\,p-2}=\Ker\delta^{\,p-2}$.
\end{prop}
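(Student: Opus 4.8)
The plan is to transplant the proof of Proposition~\ref{GJ2} onto the ladder \eqref{LD} of Proposition~\ref{FTS}. The two diagrams \eqref{TS1} and \eqref{LD} share the same second row, namely the twisted sequence, while the first row of \eqref{LD} carries the middle terms $\E(x,w_0/ys)$ and $\E(x,w_0/y)$ in place of $\E(xs,ys)$ and $\E(xs,y)$; these are precisely the groups whose dimensions $e_{p-1}(x,w_0/y)$ and $e_{p-1}(x,w_0/ys)$ occur on the right-hand side of the asserted inequality. Since both rows of \eqref{LD} are long exact sequences, the dimension count is formally identical to the one in the proof of \ref{GJ2}.

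In detail, I would first read off from the leftmost (identity) square of \eqref{LD} that $d=\delta_1\,\delta$, so that $\Ker\delta^{\,k}\subseteq\Ker d^{\,k}$ for every $k$. Fixing $p$ and taking $\E=\E\tothe{p-1}$ in \eqref{LD}, exactness of the second row produces the four-term exact sequence
\[
0\to\Image d^{\,p-2}\to\E\tothe p(xs,y)\to\E\tothe p(x,y)\to\Ker d^{\,p-1}\to0.
\]
All of these spaces are finite-dimensional, so, using $\dim\Image d^{\,p-2}=e_{p-2}(xs,y/ys)-\dim\Ker d^{\,p-2}$, a dimension count yields
\[
e_p(x,y)-e_p(xs,y)=\dim\Ker d^{\,p-1}+\dim\Ker d^{\,p-2}-e_{p-2}(xs,y/ys).
\]
Replacing each $\Ker d^{\,k}$ by the subspace $\Ker\delta^{\,k}$ turns this equality into the inequality $\ge$. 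Exactness of the first row of \eqref{LD} then supplies the companion identity
\[
\dim\Ker\delta^{\,p-1}+\dim\Ker\delta^{\,p-2}-e_{p-2}(xs,y/ys)=e_{p-1}(x,w_0/y)-e_{p-1}(x,w_0/ys),
\]
and the desired inequality follows. The passage to $\ge$ is an equality exactly when $\dim\Ker\delta^{\,p-1}=\dim\Ker d^{\,p-1}$ and $\dim\Ker\delta^{\,p-2}=\dim\Ker d^{\,p-2}$; as the spaces are finite-dimensional and nested, this is the same as $\Ker d^{\,p-1}=\Ker\delta^{\,p-1}$ and $\Ker d^{\,p-2}=\Ker\delta^{\,p-2}$, which is the stated criterion.

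I do not expect a genuine obstacle: the structural ingredient, the ladder \eqref{LD}, is already available from Proposition~\ref{FTS}, and the relation $d=\delta_1\,\delta$ used above is simply the commutativity of its leftmost square. The only points that demand care are bookkeeping ones — keeping track of the degree shifts of $d$, $\delta$, $\gamma$, $\chi$ under the identification $\E=\E\tothe{p-1}$, and observing that all the (fractional) Verma-module extension groups involved are finite-dimensional, so that counting dimensions is legitimate. Everything else is a verbatim repetition of the argument in \ref{GJ2}, with $\E(xs,ys)$ and $\E(xs,y)$ swapped for $\E(x,w_0/ys)$ and $\E(x,w_0/y)$.
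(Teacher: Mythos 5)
Your proposal is correct and matches the paper's proof, which is exactly the one-line remark ``Applying the same argument as in the proof of \ref{GJ2}'' applied to the ladder \eqref{LD} of Proposition \ref{FTS}. The dimension count, the containment $\Ker\delta^{\,k}\subseteq\Ker d^{\,k}$ from $d=\delta_1\,\delta$, and the equality criterion are all as intended.
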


\begin{cor}
\label{AGJ3}
If $xs>x$ and $ys<y$, then 
\[
e_1(x,y)-e_1(xs,y)\ge e_0(x,w_0/y)-e_0(xs,w_0/y)
\]
and this is an equality if, and only if, $\Ker d^{\,0}=\Ker \delta^{\,0}$
\end{cor}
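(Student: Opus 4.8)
The plan is to obtain Corollary~\ref{AGJ3} as the case $p=1$ of Proposition~\ref{AGJ2}, rewriting the right-hand side by means of Proposition~\ref{AGJ1} and observing that one of the two equality conditions is vacuous in this degree.

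First I would apply Proposition~\ref{AGJ2} with $p=1$, which gives
\[
e_1(x,y)-e_1(xs,y)\ge e_0(x,w_0/y)-e_0(x,w_0/ys),
\]
with equality if and only if $\Ker d^{\,0}=\Ker\delta^{\,0}$ and $\Ker d^{\,-1}=\Ker\delta^{\,-1}$. To bring the inequality into the stated shape I would invoke Proposition~\ref{AGJ1}: since $xs>x$ and $ys<y$ it gives $e_0(xs,w_0/y)=e_0(x,w_0/ys)$, so the right-hand side above is exactly $e_0(x,w_0/y)-e_0(xs,w_0/y)$, the bound claimed in the corollary.

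It remains only to discard the condition $\Ker d^{\,-1}=\Ker\delta^{\,-1}$. In the diagram of Proposition~\ref{FTS} the maps $d$ and $\delta$ both issue from $\E\tothe{-}\!(xs,y/ys)$, and in the notation carried over from the proof of Proposition~\ref{GJ2} the superscript on $d$ and $\delta$ records the source degree of the map; hence for $p=1$ the maps $d^{\,-1}$ and $\delta^{\,-1}$ issue from $\E\tothe{-1}(xs,y/ys)$, which vanishes by the convention that $\E\tothe q=0$ for $q<0$. Thus $\Ker d^{\,-1}=0=\Ker\delta^{\,-1}$ automatically, and equality in the displayed inequality holds precisely when $\Ker d^{\,0}=\Ker\delta^{\,0}$. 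The statement is a straight specialization, so I anticipate no real difficulty; the one point that needs care is exactly this degree bookkeeping, namely confirming that $d^{\,-1}$ and $\delta^{\,-1}$ really are maps out of the zero object.
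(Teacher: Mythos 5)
Your proposal is correct and follows the paper's own route exactly: take $p=1$ in Proposition \ref{AGJ2} and rewrite $e_0(x,w_0/ys)$ as $e_0(xs,w_0/y)$ via Proposition \ref{AGJ1}. Your extra remark that the condition $\Ker d^{\,-1}=\Ker\delta^{\,-1}$ is vacuous (since $\E\tothe{-1}=0$) is a careful piece of bookkeeping the paper leaves implicit, but it is the same argument.
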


\begin{proof}
Taking $p=1$ in \ref{AGJ2},
\[
e_1(x,y)-e_1(xs,y)\ge e_0(x,w_0/y)-e_0(x,w_0/ys).
\]
By \ref{AGJ1}, $e_0(x,w_0/ys)=e_0(xs,w_0/y)$.
\end{proof}

The conclusion is equivalent to $v(x,y)\ge v(xs,y)$. When $xs<ys$, Abe proves $v(x,y)=v(xs,y)$ by showing that the images of $\E^1(xs,y)$ and $\E^1(x,y)$ in $\E^1(x,w_0)$ are the same \cite[4.3(2)]{A1}.

The preceding proposition is sufficient, by itself, to explain Abe's counter-example for $e_1=r_1$. In type $\text{B}_3$, let $s_1$, $s_2$, and $s_3$ be the simple root reflections, where $s_1s_2$ has order $3$ and $s_2s_3$ has order $4$. Take $x=s_1s_3$, $y=w_0s_3=s_2s_3s_1s_2s_3s_2s_1s_2$, and $s=s_2$. Using the work of H. Matumoto \cite{Mat} on scalar, generalized Verma module homomorphisms, Abe shows that there is a nonzero homomorphism between $M_x$ and $M_{w_0}/M_y$ so $e_0(x,w_0/y)\ne0$  \cite{A2}. Kazhdan-Lusztig multiplicities imply that $e_0(x,w_0/y)-e_0(xs,w_0/y)=1$. By \ref{AGJ3}, $e_1(x,y)>e_1(xs,y)$, which means $e_1(x,y)\ne r_1(x,y)$.
 
\begin{prop}
\label{AGJ4}
Suppose that $x<xs\le y$ and $ys<y$. If $xs\not<ys$, then $v(x,y)\le v(xs,y)+1$ and this is an equality if, and only if, $\Ker\delta^{\,0}=0$.
\end{prop}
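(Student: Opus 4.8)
The plan is to compute $v(x,y)-v(xs,y)$ outright and show it equals $1-\dim\Ker\delta^{\,0}$. Since $x<xs\le y$, both $x$ and $xs$ lie below $y$, so the defining formula for $v$ applies at each, and
\[
v(x,y)-v(xs,y)=\bigl(e_1(x,y)-e_1(xs,y)\bigr)-\bigl(e_0(x,w_0/y)-e_0(xs,w_0/y)\bigr).
\]
I would then evaluate the two parenthesized differences in turn.

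For the first: the standing hypotheses $xs>x$, $ys<y$, $xs\not<ys$ are exactly those of Corollary~\ref{GJ3}, which gives $e_1(x,y)=e_1(xs,y)+e_0(xs,y)$, and $e_0(xs,y)=\xi(xs,y)=1$ because $xs\le y$; so the first difference is $1$. For the second: I would take $\E=\E\tothe{0}$ in the first row of the diagram of Proposition~\ref{FTS}. Its leftmost term, $\E\tothe{-}(xs,y/ys)$, vanishes (negative Ext groups are zero), so exactness of that row collapses it into a short exact sequence
\[
0\longrightarrow\E\tothe{0}(x,w_0/ys)\xrightarrow{\alpha}\E\tothe{0}(x,w_0/y)\xrightarrow{\kappa}\Ker\delta^{\,0}\longrightarrow 0,
\]
where $\delta^{\,0}$ is the very connecting map appearing in Corollary~\ref{AGJ3}. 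A dimension count then gives $e_0(x,w_0/y)-e_0(x,w_0/ys)=\dim\Ker\delta^{\,0}$, and Proposition~\ref{AGJ1} rewrites $e_0(x,w_0/ys)=e_0(xs,w_0/y)$. Combining the two steps, $v(x,y)-v(xs,y)=1-\dim\Ker\delta^{\,0}\le1$, with equality precisely when $\Ker\delta^{\,0}=0$; this is the assertion.

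No step is genuinely hard; the care needed is all bookkeeping. The points to watch are: that the $\delta^{\,0}$ named in the statement is indeed the connecting homomorphism produced by the first row of \ref{FTS} (this is the $p=1$ instance of the setup behind \ref{AGJ2}--\ref{AGJ3}); that the leftmost term of that row really is $0$, so the long exact sequence splits as above; and that one is in the formula-case of $v$, not the $v=0$ case, at both $(x,y)$ and $(xs,y)$, which holds since $x<xs\le y$. As a consistency check against \ref{AGJ3}, which already forces $v(x,y)\ge v(xs,y)$: because $d^{\,0}=\delta_1\,\delta^{\,0}$ and, by the argument behind \ref{AGJ2}, $\dim\Ker d^{\,0}=e_1(x,y)-e_1(xs,y)=1$ here, one in fact has $\dim\Ker\delta^{\,0}\in\{0,1\}$.
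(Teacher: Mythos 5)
Your proof is correct and takes essentially the same route as the paper: the paper's (much terser) argument likewise extracts $e_1(x,y)-e_1(xs,y)=1$ from $d=0$ via \ref{GJ3}, and identifies $e_0(x,w_0/y)-e_0(x,w_0/ys)=\dim\Ker\delta^{\,0}\le e_0(xs,y/ys)=1$ from the degree-zero part of the first row of \ref{FTS}, combining with \ref{AGJ1} exactly as you do. The only difference is that you spell out the dimension counts the paper leaves implicit.
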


\begin{proof}
In \ref{FTS}, $d=0$ by \ref{GJ3}. Also $e_0(xs,y/ys)=1$ implies that $e_0(x,w_0/y)-e_0(x,w_0/ys)\le1$.
\end{proof}

The condition for equality in \ref{AGJ4} must somehow be equivalent to the condition $v_s\not\in sV(w_0xs,w_0y)$ from \cite[4.3(2)]{A1}. Finally, another twisted sequence can be used to prove a result that is also consistent with \cite[4.3(2)]{A1}.

Suppose that $xs>x$ and $ys<y$. Let $M=M_{xs}$ and $N=M_{w_0}/M_{ys}$. There is a twisted sequence associated to $N$. From diagram \eqref{snake},
$IN=M_{w_0s}/M_{ys}$ so, by \ref{TS}, there is  a commutative diagram with exact rows,
\begin{equation}
\label{SLD}
\begin{CD}
@>>>\E\tothe{-}\!(xs,\frac{w_0}{w_0s})@>\delta>>\E(xs,\frac{w_0s}{ys})@>\alpha>>\E(xs,\frac{w_0}{ys})@>\kappa>>\E(xs,\frac{w_0}{w_0s})@>>>\\
@.@|@V\delta_1VV@V\delta_2VV@|\\
@>>>\E\tothe{-}\!(xs,\frac{w_0}{w_0s})@>d>>\E\tothe{+}\!(xs,\frac{w_0}{y})@>\gamma>>\E\tothe{+}\!(x,\frac{w_0}{ys})@>\chi>>\E(xs,\frac{w_0}{w_0s})@>>>.\\
\end{CD}
\end{equation}

\medskip
\begin{prop}
\label{last}
Suppose that $x<xs\le y$ and $ys<y$. If $xs\not<w_0s$, then $v(x,y)=v(xs,y)+1$.
\end{prop}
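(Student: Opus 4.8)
The plan is to check the equality criterion in \ref{AGJ4} by transporting a short computation out of diagram \eqref{SLD} along a naturality square.

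First I would dispose of the combinatorics. From $x<xs$ we get $x\ne w_0$, hence $xs\ne w_0s$; together with $xs\not<w_0s$ this forces $xs\not\le w_0s$, so $e_0(xs,w_0s)=0$. Since $ys\le w_0$, $ys<y$ and $w_0s<w_0$, a standard property of the Bruhat order gives $ys\le w_0s$; consequently $xs\le ys$ would entail $xs\le w_0s$, so $xs\not\le ys$ and hence $e_p(xs,ys)=0$ for every $p$. In particular the hypotheses of \ref{AGJ4} are met, so $v(x,y)=v(xs,y)+1$ as soon as $\Ker\delta^{\,0}=0$ in diagram \eqref{FTS}; it therefore suffices to prove that $\delta^{\,0}$ is injective.

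Next I would extract the needed injectivity from \eqref{SLD} taken with $\E=\E\tothe{0}$. Applying $\h(M_{xs},-)$ to $M_{ys}\hookrightarrow M_{w_0s}\onto M_{w_0s}/M_{ys}$ and using $e_0(xs,w_0s)=0$ together with $e_1(xs,ys)=0$ shows $\h(M_{xs},M_{w_0s}/M_{ys})=0$, i.e.\ the term $\E(xs,\tfrac{w_0s}{ys})$ of \eqref{SLD} vanishes. Exactness of the first row of \eqref{SLD} then makes $\kappa\colon\h(M_{xs},M_{w_0}/M_{ys})\to\h(M_{xs},M_{w_0}/M_{w_0s})$ injective, while the right-hand square of \eqref{SLD} reads $\kappa=\chi\,\delta_2$; hence $\delta_2\colon\h(M_{xs},M_{w_0}/M_{ys})\to\E\tothe{1}(M_x,M_{w_0}/M_{ys})$ is injective too. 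This $\delta_2$ is the connecting homomorphism attached to the canonical exact sequence $M_{xs}\hookrightarrow\theta M_{xs}\onto M_x$ by $\h(-,M_{w_0}/M_{ys})$.

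Finally I would identify $\delta^{\,0}$ with $\delta_2$. From the construction of \eqref{FTS} one has $\delta^{\,0}=i_*\,\delta_3$, where $i\colon M_y/M_{ys}\hookrightarrow M_{w_0}/M_{ys}$ is the natural inclusion and $\delta_3\colon\h(M_{xs},M_y/M_{ys})\to\E\tothe{1}(M_x,M_y/M_{ys})$ is the connecting homomorphism attached to the \emph{same} exact sequence $M_{xs}\hookrightarrow\theta M_{xs}\onto M_x$, but by $\h(-,M_y/M_{ys})$. Because connecting homomorphisms are natural in the coefficient module, $i_*\,\delta_3=\delta_2\,i_*$, where $i_*\colon\h(M_{xs},M_y/M_{ys})\to\h(M_{xs},M_{w_0}/M_{ys})$ is post-composition with the monomorphism $i$ and hence injective. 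Composing with the injective $\delta_2$ gives $\delta^{\,0}=\delta_2\,i_*$ injective, so $\Ker\delta^{\,0}=0$ and $v(x,y)=v(xs,y)+1$ by \ref{AGJ4}.

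The delicate step is the identification just described: one must verify that the map called $\delta_2$ in \eqref{SLD} and the map $\delta_3$ implicit in \eqref{FTS} are the two connecting homomorphisms of the single exact sequence $M_{xs}\hookrightarrow\theta M_{xs}\onto M_x$ for the coefficient modules $M_{w_0}/M_{ys}$ and $M_y/M_{ys}$, linked by $i$, so that naturality splices \eqref{SLD} onto the criterion of \ref{AGJ4}. Everything else is dimension counting with $e_0=\xi$ and the vanishing range of the $e_p$.
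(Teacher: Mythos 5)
Your proposal is correct and follows essentially the same route as the paper: deduce $\h(M_{xs},M_{w_0s}/M_{ys})=0$ from $xs\not\le w_0s$ and $xs\not\le ys$, read off from diagram \eqref{SLD} in degree zero that $\kappa$ and hence $\delta_2$ is injective, factor $\delta^{\,0}=\delta_2\,i_*$ through the monomorphism induced by $M_y/M_{ys}\hookrightarrow M_{w_0}/M_{ys}$, and conclude via the equality criterion of \ref{AGJ4}. The only difference is that you spell out the Bruhat-order and naturality steps that the paper compresses into ``working through the definitions.''
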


\begin{proof}
Because $ys<w_0s$, $xs\not<w_0s$ implies $xs\not<ys$ and hence $e_0(xs,w_0s/ys)=0$. If $\E$ is identified with $\E^0$ in diagram \eqref{SLD}, $\kappa$ is an injective map, which implies that $\delta_2$ is injective. Working through the definitions, there is a commutative diagram,
\begin{equation*}
\begin{CD}
0@>>>\h(xs,y/ys)@>>>\h(xs,w_0/ys)\\
@.@V\delta VV@V\delta_2VV\\
@.\E^1(x,w_0/ys)@=\E^1(x,w_0/ys),\\
\end{CD}
\end{equation*}
where $\delta$ is the homomorphism defined in the proof of \ref{FTS}. Since $\delta_2$ is injective, $\Ker\delta=0$ and $v(x,y)=v(xs,y)+1$ by \ref{AGJ4}.
\end{proof}

In a similar vein, one can prove that $v(x,y)=v(xs,y)$ if $x<xs<ys<y$ and $e_0(xs,w_0s/ys)=0$. In that case, $e_1(x,y)=e_1(xs,y)$ as well.

If the goal is a general recursive formula for $e_1$, then the goal is well over the horizon. The classic conjecture, $e_1=r_1$, is false. Abe's recursion for $v$ is very effective (and $v$ is bounded above by the rank of $\fg$), but the resulting determination of $e_1$ depends on the very difficult problem of generalized Verma module homomorphisms. If $x\le y$ and $e_0(x,w_0/y)$ is known, then $e_1(x,y)=v(x,y)+e_0(x,w_0/y)$.


\begin{thebibliography}{AAAA}

\bibitem[Abe1]{A1} N.\  Abe, {\em First extension groups of Verma modules and $R$-polynomials}, J.\ Lie Theory \textbf{25} (2015), 377--393.

\bibitem[Abe2]{A2} N.\ Abe, private communication, September 14, 2015.

\bibitem[BGG]{BGG} I.\ N.\ Bernstein, I.\ M.\ Gelfand and S.\ I.\ Gelfand, {\em A category of $\fg$-modules}, Func. Anal. Appl. \textbf{10} (1976), 87--92.

\bibitem[Boe]{B} B.\ D.\ Boe, {\em A counterexample to the Gabber-Joseph conjecture}, Kazhdan-Lusztig theory and related topics, Contemp.\ Math., , Amer.\ Math.\ Soc., Providence, R.\ I., \textbf{139} (1992), 1--3.

\bibitem[C]{C} K.\ J.\ Carlin, {\em Extensions of Verma modules}, Trans.\ Amer.\ Math.\ Soc.\ \textbf{294}, no.\ 1 (1986), 29--43.

\bibitem[GJ]{GJ} O.\ Gabber and A.\ Joseph, {\em Towards the Kazhdan-Lusztig conjecture}, Ann.\ Sci.\ Ecole Norm.\ Sup.\ \textbf{14} (1981), 261-302.

\bibitem[Hum1]{H2} J.\ E.\ Humphreys, {\em Reflection Groups and Coxeter Groups}, Cambridge Studies in Advanced Math.\ \textbf{29}, Cambridge University Press, Cambridge, 1990.

\bibitem[Hum2]{H1} J.\ E.\ Humphreys, {\em Representations of Semisimple Lie Algebras in the BGG Category $\cO$}, Grad.\ Studies in Math.\ \textbf{94}, Amer.\ Math.\ Soc., Providence, R.\ I., 2008.

\bibitem[Mat]{Mat} H.\ Matumoto, {\em The homomorphisms between scalar generalized Verma modules}, Compos.\ Math.\ \textbf{150}, no.\ 5 (2014), 877--892.

\bibitem[Maz]{M} V.\ Mazorchuk, {\em Some homological properties of the category $\cO$}, Pacific Jour.\ of Math.\ \textbf{232} (2007), 313--341.

\bibitem[W]{W} C.\ A.\ Wiebel. {\em An Introduction to Homological Algebra}, Cambridge Studies in Advanced Mathematics \textbf{38}, Cambridge University Press, 1994.

\end{thebibliography}
\end{document}